\numberwithin{equation}{section}
\newtheorem{thm}{Theorem}
\numberwithin{thm}{section}
\newtheorem{prop}[thm]{Proposition}
\newtheorem{lemma}[thm]{Lemma}
\newtheorem{corollary}[thm]{Corollary}
\newtheorem{example}[thm]{Example}
\newtheorem{remark}[thm]{Remark}
\newtheorem{definition}[thm]{Definition}
\newenvironment{rem}{\begin{remark}\rm}{\end{remark}}
\newcounter{FNC}[page]
\def\fauxfootnote#1{{\addtocounter{FNC}{2}$^\fnsymbol{FNC}$%
     \let\thefootnote\relax\footnotetext{$^\fnsymbol{FNC}$#1}}}
\newcommand{\calA}{\mathcal{A}}
\newcommand{\Q}{\mathbb{Q}}
\newcommand{\R}{\mathbb{R}}
\newcommand{\Z}{\mathbb{Z}}
\newcommand{\A}{{\mathcal A}}
\newcommand{\vol}{{\rm vol}}
\newcommand{\rk}{{\rm rk}}
\newtheorem*{descartes}{Descartes' rule of signs}
\newtheorem*{problem}{Problem}
\title[Descartes' Rule of Signs for Circuits]{Descartes' Rule of Signs for Polynomial Systems supported on Circuits} 
\author{Fr\'ed\'eric Bihan}
\address{Laboratoire de Math\'ematiques\\
         Universit\'e de Savoie\\
         73376 Le Bourget-du-Lac Cedex\\
         France}
\email{Frederic.Bihan@univ-savoie.fr}
\urladdr{http://www.lama.univ-savoie.fr/~bihan/}
\author{Alicia Dickenstein}
\address{Dto.\ de Matem\'atica, FCEN, Universidad de Buenos Aires, and IMAS (UBA-CONICET), Ciudad Universitaria, Pab.\ I, 
C1428EGA Buenos Aires, Argentina}
\email{alidick@dm.uba.ar}
\urladdr{http://mate.dm.uba.ar/~alidick}
\thanks{AD was partially supported by UBACYT 20020100100242, CONICET PIP 20110100580, and ANPCyT PICT 2013-1110, Argentina.}  
\begin{document}

\begin{abstract}
We give a multivariate version of Descartes'~rule of signs to bound the number
of positive real roots of a system of polynomial equations in $n$ variables with $n+2$ monomials, 
in terms of the sign variation of a sequence associated both to the exponent vectors and the given coefficients.  
We show that our bound is sharp and is related to
the signature of the circuit.
\end{abstract}

\maketitle

\section{Introduction}
The following well-known rule of signs  for univariate polynomials was proposed by Ren\'e Descartes in 1637
in ``La G\'eometrie'', an appendix to his ``Discours de la M\'ethode'', see \cite[pp.\ 96--99]{struik}:

\begin{descartes}\label{pro:Descartes_original}
Given a univariate
real polynomial $f(x) = c_0 + c_1 x + \cdots + c_r x^r$,
the number of positive real roots of $f$ (counted with multiplicity) is bounded by the number of sign 
variations 
$$sgnvar(c_0, \dots, c_r)$$ 
in the ordered sequence of the coefficients of $f$. Moreover, the difference between these integer numbers is even.
\end{descartes}
The integer $sgnvar(c_0, \dots, c_r) \in \{0, \dots, r\}$ is equal to the number of sign changes 
between two consecutive elements,  after removing
all zero elements. In other words, $sgnvar(c_0, \dots, c_r)$ is the number of distinct pairs $(i,j)$ of integers, 
$0 \leq i <j \leq r$,  which satisfy $c_i \cdot c_j <0$ and $c_\ell=0$ for any integer
$\ell$ with $i<\ell<j$.

No general multivariate generalization is known for this simple rule. Itenberg and Roy  gave in 1996~\cite{ir96} 
a lower bound for any upper bound on the number of positive solutions of a sparse system of polynomial equations.
 They used a construction based on the associated 
mixed subdivisions of the Minkowski sum of the Newton polytopes of the input polynomials, 
and the signs of the coefficients of the individual polynomials at the vertices of mixed cells. 
Recently, the first multivariate generalization of Descartes' rule in case of at most one positive real root, was stated in Theorem~1.5 
of~\cite{MFRCSD13}. The main change of viewpoint in that article is that the number of positive roots of 
a square polynomial system (of $n$ polynomials in $n$ variables) is related to the signs of the maximal 
minors of the matrix of exponents and the matrix of coefficients of the system, that is, 
to the associated oriented matroids. We take this approach to 
get a multivariate version of Descartes'~rule of signs for systems supported on circuits (see 
 Theorem~\ref{thm:main} below).
 
We fix an exponent set $\A=\{w_0,w_1,\ldots,w_{n+1}\} \subset \Z^n$ of cardinality
$n+2$
and for any given real matrix $C =(c_{i,j}) \in \R^{n \times (n+2)}$ we consider the associated 
sparse multivariate polynomial system in $n$ variables $x=(x_1, \dots, x_n)$ with support $\A$:
\begin{equation}\label{E:system}
f_i(x)=\sum_{j=0}^{n+1} c_{i,j}x^{w_j} = 0 \, , \quad i=1,\ldots,n.
\end{equation}
We denote by $n_\A(C)$ the number of real positive solutions  of ~\eqref{E:system}
counted with multiplicity.

We solve the following question:  

\begin{problem}
When $n_\A(C)$ is finite, find a sharp upper bound
in terms of the number of sign variations of an associated sequence of real numbers. 
\end{problem}

Note that $n_\A(C)$ is a {\em linear} invariant of $C$, since after multiplying $C$ on the left by an invertible 
${n \times n}$ real matrix, the resulting coefficient matrix defines a system with the same solutions. 
On the other side, $n_\A(C)$ is an {\em affine} invariant of the configuration $\A$, since the number of positive solutions 
is unaltered if we multiply each equation by a fixed monomial (so, after translation of $\A$) or if we perform a
 monomial sustitution of variables by a matrix
with nonzero determinant. It is useful to consider the integer matrix
$A \in \Z^{(n+1) \times (n+2)}$:
\begin{equation}\label{eq:A}
A=\left(
\begin{array}{ccc}
1 &  \ldots &1 \\
 w_0& \ldots   & w_{n+1} 
\end{array}
\right).
\end{equation}
\noindent Thus, $n_\A(C)$ is also a {\em linear} invariant of $A$.

We make throughout the paper the following natural assumptions.
When the convex hull of $\A$ is of dimension smaller than $n$ (or equivalently, when
the rank $\rk(A)$ of $A$ is smaller than $n+1$),  system~\eqref{E:system} corresponds to an overdetermined 
system of $n$ equations in less than $n$ variables, so there are no solutions in general.
When the matrix $C$ does not have rank $n$, 
we have a system of at most $n-1$ equations in $n$ variables.
Thus, we will then assume that 
\begin{equation}\label{eq:rank}
{\rk}(A) \, = \, n+1, \quad {\rk}(C) \, = \, n.
\end{equation}

There is a basic necessary condition for the existence of at least one positive solution  
of a system of $n$ sparse polynomial equations in $n$ variables with any number of 
fixed monomials, in particular for our system~\eqref{E:system}.  
Let $C_0, \dots, C_{n+1} \in \R^n$ denote the column vectors of the coefficient matrix $C$ and call
\begin{equation}
{\mathcal C}^\circ = \R_{>0} C_0 + \dots + \R_{>0} C_{n+1}
\end{equation}
the positive cone generated by these vectors.
Given a solution $x \in \R_{>0}^n$,  the vector $(x^{w_0}, \dots, x^{w_{n+1}})$  is positive and so
the origin ${\bf 0} \in \R^n$ belongs to $\mathcal C^\circ$.
So, necessarily
\begin{equation}\label{eq:nonempty}
{\bf 0} \in {\mathcal C}^\circ.
\end{equation}
Note that condition~\eqref{eq:nonempty}, together with the hypothesis that ${\rm rk}(C) =n$ in \eqref{eq:rank} is equivalent to ${\mathcal C}^\circ =\R^n$.
We furthermore give in Proposition~\ref{P:noloss} a necessary and sufficient condition
to have $n_A(C)<\infty$. 

Our main result is Theorem~\ref{thm:main}, where we give
a bound for $n_A(C)$ in case~\eqref{eq:nonempty} is satisfied and $n_A(C)$ is finite.
 We also prove a congruence modulo $2$ as in the classical Descartes' rule of signs, see Proposition~\ref{congruence}.
We define an {\em ordering} 
on the column vectors of the matrix $C$ (Definition~\ref{def:alpha}), which
 is determined, up to total reversal, by the signs of the maximal minors of $C$ and it is unique up to reversal
 in the generic case of a {\em uniform} matrix $C$ (that is, with all nonzero minors).
It induces an ordering on the configuration $\A$, and the bound for $n_A(C)$ provided by Theorem~\ref{thm:main}
is the number of sign variations of the corresponding ordered  sequence of coefficients in any affine relation of the given exponents $\A$. The main ingredients
of our proof are a generalization of Descartes' rule of signs for vectors spaces of analytic real-valued functions and the classical Gale duality in linear algebra.
Gale duality allows us to reduce the problem of bounding $n_A(C)$ to the problem of bounding the number of roots 
of a particular univariate rational function determined by the system.

We show in Theorem~\ref{thm:mainbisbis} that the signature of $\A$ (Definition~\ref{def:signature}) gives a 
bound for $n_\A(C)$. For instance, if $\A$ consists of the vertices of a 
simplex plus one interior point, then our results imply that $n_\A(C) \le 2$, 
that is, the number of positive real roots of 
system~\eqref{E:system} cannot exceed $2$, in {\em any} dimension (see Example~\ref{ex:simplex}).
We also recover the known upper bound $n_\A(C) \le n+1$ obtained in \cite{B13}  and \cite{P-R}, 
but we show that $n_\A(C)=n+1$ could only be attained for
a particular value of the signature of $\A$ and uniform 
matrices $C$.

An important consequence of classical Descartes' rule of signs is that the number of real roots of a real 
univariate polynomial can be bounded in terms of the number of  monomials
 (with nonzero coefficient), independently of its degree. 
In the multivariate case, Khovanskii~\cite[Corollary 7]{Fewnomials} 
proved the remarkable result that the number of nondegenerate solutions in $\R^n$ of a system of $n$ 
real polynomial equations  can also be bounded 
solely in terms of the number of distinct monomials appearing in these equations. 
In contrast to Descartes' rule, Khovanskii's bound is far from sharp and
the known refinements do not depend on the signs of the coefficients or the particular
configuration of the exponents but only on their number (see \cite[Chapters 5--6]{SottileBook}). 
Khovanskii's result, as well as ours, is also valid for real configurations $\A$.

The paper is organized as follows.
We give all the necessary definitions and state precisely our main results in the next section. 
In section~\ref{sec:coro} we present some interesting corollaries of these results.
Section~\ref{sec:proofs} contains the proofs of Theorems~\ref{thm:main}
and Proposition~\ref{P:noloss}. Finally, we show the optimality of our bounds for $n_\A(C)$ in Section~\ref{sec:opt}.


\section{Orderings and statement of our main result}\label{sec:1}

Let $\A=\{w_0,w_1,\ldots,w_{n+1}\} \subset \Z^n$, $A \in \Z^{(n+1) \times(n+2)}$ 
and $C \in \R^{n \times (n+2)}$ as in the Introduction. In particular, we assume that 
Conditions~\eqref{eq:rank} are satisfied.
For any natural number $k$, denote $[k]=\{0, \dots,k-1\}$. In particular $[n+2]=\{0, \dots,n+1\}$.

\subsection{The configuration $\A$ and the matrix $A$}\label{ssec:A}
The kernel of the matrix $A$ has dimension $1$ and can 
be generated by the following vector $\lambda \in \Z^{n+2}$.
For any $\ell \in \{0, \dots, n+1\}$, consider the matrix  $A(\ell) \in \Z^{(n+1) \times (n+1)}$ 
obtained from the matrix $A$ in~\eqref{eq:A} by removing the column $\ell$
and set $\lambda_{\ell}=(-1)^{\ell+1} \det(A(\ell))$.  
The vector $\lambda$ gives an affine relation among the elements in $\A$:
\begin{equation}\label{E:affinerelation}
\sum_{\ell=0}^{n+1} {\lambda}_{\ell}\, w_{\ell}=0 \; \mbox{,} \quad \sum_{\ell=0}^{n+1} {\lambda}_{\ell}=0.
\end{equation}

Let $I \in \Z_{>0}$ be the greatest
common divisor of $\lambda_0, \dots, \lambda_{n+1}$ and set
\begin{equation}\label{eq:lambda}
\tilde{\lambda}_i \, = \, {\lambda_i}/ I, \quad i=0, \dots, n+1.
\end{equation}
So, this (nonzero) vector $\tilde{\lambda}$ gives the unique (up to sign) 
affine relation among the elements of  $\A$ with coprime
coefficients. The integer $I$ is the index of the subgroup $\Z \A$ generated by the configuration $\A$ in $\Z^n$. 
The normalized volume ${\rm vol}_{\Z}(\A)$ of the configuration $\A$ with respect to  the lattice $\Z^n$, 
is the Euclidean volume of the convex hull of $\A$ multiplied by $n!$ (so that the unit simplex has volume $1$).
It is well-known that this normalized volume ${\rm vol}_{\Z}(\A)$ bounds the number of isolated complex solutions of
system~\eqref{E:system}. Moreover, we have ${\rm vol}_{\Z}(\A)=\, \sum_{{\lambda}_i >0} \lambda_i \, = \, - \sum_{\lambda_i <0} \lambda_i$.
The number of isolated (real) positive solutions of system~\eqref{E:system} is the same if we consider $\A$ as a configuration in $\Z \A$ or in $\Z^n$. Indeed,
we may choose a basis $(v_1,\dots,v_n)$ for $\Z \A$, and see the equations~\eqref{E:system} as a system depending on new  variables $y_i=x^{v_i}$.
The normalized volume of our configuration with respect to $\Z \A$ is defined as 
\begin{equation}\label{eq:vol}
{\rm vol}_{\Z \A}(\A)\, = \, {\rm vol}_\Z(\A) /I.
\end{equation}
So when $n_A(C)$ is finite, we have the bound
\begin{equation}
n_A(C) \, \le \, {\rm vol}_{\Z\A}(\A)\, = \, \sum_{{\lambda}_i >0} \tilde{\lambda}_i \, = \, - \sum_{\lambda_i <0} \tilde{\lambda}_i.
\end{equation}

The configuration $\A$ is said to be a {\it circuit} if $\lambda_j \neq 0$ for all $j$ in
$\{0,1,\ldots, n+1\}$, 
or equivalently, if any subset of $n+1$ points in $\A$ is affinely independent 
(i.e., its convex hull has dimension $n$). In fact, we can always reduce
our problem to this case, as we now explain. In general, $\A$ is an $(n-m)$-{\em pyramid}
over a circuit of affine dimension $m$ (with $1 \le m \le n$) 
given by the subconfiguration $\A'$ of $\A$ of $m+2$ vectors with indices corresponding to nonzero $\lambda_j$.  
Without loss of generality, assume $\A'= \{w_0, \dots, w_{m+1}\}$, so that $\lambda_j\neq 0$ for $j=0, \dots, m+1$ 
and $\lambda_j=0$ for any $j > m+1$. Then, either we can find a system equivalent to~\eqref{E:system} of the form:
\[
\begin{array}{lr}
f_i(x)=\sum_{j=0}^{m+1} c'_{i,j}x^{w_j}= 0, & \, i=1,\ldots,m,\\
f_{i}(x) = \sum_{j=0}^{m+1} c'_{i,j}x^{w_j} + x^{w_{i+1}}= 0, & \, i=m+1, \dots,n,
\end{array}
\]
or system~\eqref{E:system} has either $0$ or an infinite number of solutions.
In case we can find an equivalent system as above, 
the first $m$ equations define essentially 
a square system of $m$ equations in $m$ variables. 
A positive solution of this smaller system does not necessarily extend  
to a positive solution $x \in \R^n_{> 0}$, but if such extension exists, it is unique. So,
in this case $n_\A(C)$ is bounded by $n_{\A'}(C')$, where $C' \in \R^{m \times (m+2)}$ 
is the coefficient matrix of the first $m$ equations.
Therefore, to simplify the notation, we will assume in what follows that  {$m=n$}, that is, that $\A$ is a {circuit}.
If $\A$ is not a circuit, $n$ has to be replaced by $m$ in general in the statements.

We will need the following definition.

\begin{definition}\label{def:signature}
Given a circuit $\A = \{w_0, \dots, w_{n+1}\} \subset \Z^n$, 
and a nonzero affine relation ${\lambda} \in \Z^{n+2}$ among the $w_j$.
We call $\aleph_+ = \{ j \in [n+2]\, : \, \lambda_j > 0\}$, $\aleph_- = 
\{ j \in [n+2] \, : \, \lambda_j < 0\}$,
and we denote by $a_+$ (resp. $a_-$) the cardinality of $\aleph_+$ (resp. $\aleph_-$).
The pair $\{a_+,a_-\}$ is usually called the {signature} of $\A$. It is unordered but it may
consist of a repeated value. For notational convenience, we will denote by
\begin{equation}\label{eq:signaturea}
\sigma(\A) = {\rm min} \{a_+,a_-\}.
\end{equation}
\end{definition} 
Note that as $\sum_j {\lambda}_j=0$, both $a_+, a_- \ge 1$.

\subsection{The matrix $C$}\label{ssec:C}
Given $i, j \in [n+2], i\neq j$, we call $C(i)$ the submatrix of $C$ 
with columns indexed by the indices in $[n+2]\setminus \{i\}$,
 and by $C(i,j)$ the submatrix of  $C$  with columns indexed by $[n+2] \setminus \{i,j\}$ (in the same order).
Note that we are not assuming that $i< j$, but only that they are different.

Our first lemma points out some easy consequences of Condition~\eqref{eq:nonempty}. Note that this
condition means that there is a positive vector in the kernel of the coefficient matrix. 

\begin{lemma}\label{lem:first}
Let $C \in \R^{n \times (n+2)}$ be a matrix of rank $n$ which satisfies~\eqref{eq:nonempty}.
Then, the following assertions hold
\begin{itemize}
\item[(i)] $C$ is not a {\em pyramid}, that is, ${\rk}(C(i))=n$ for any $i \in [n+2]$.
\item[(ii)] For any index $j_1 \in [n+2]$, there exists another index $j_2$ such that
${\rk}(C(j_1,j_2))=n$.
\item[(iii)] Assume $j_1\neq j_2$ and ${\rk}(C(j_1,j_2))=n$. Then, for any other index $i$,
either ${\rk}(C(j_1,i))=n$ or ${\rk}(C(j_2,i))=n$. 
\end{itemize}
\end{lemma}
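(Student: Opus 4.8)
The plan is to translate the condition~\eqref{eq:nonempty} — which, as noted, says there is a strictly positive vector $v = (v_0, \dots, v_{n+1})$ in $\ker(C)$, i.e. $\sum_j v_j C_j = {\bf 0}$ with all $v_j > 0$ — into rank statements about column-deleted submatrices, by repeatedly exploiting that this relation lets one \emph{rewrite} any column as a positive combination of the others. For part~(i), suppose $\rk(C(i)) < n$ for some $i$. Since $\rk(C) = n$, the column $C_i$ must then be outside the span of the remaining $n+1$ columns, forcing $\rk(C) \le \rk(C(i)) + 1 \le n$, but more to the point the relation $v_i C_i = -\sum_{j \ne i} v_j C_j$ with $v_i \neq 0$ exhibits $C_i$ as a linear combination of the columns of $C(i)$, so $\operatorname{span}(C_i) \subseteq \operatorname{span}(C(i))$, whence $\rk(C(i)) = \rk(C) = n$, a contradiction. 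So $C$ is not a pyramid.

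For part~(ii), fix $j_1$. By~(i) we know $\rk(C(j_1)) = n$, so among the $n+1$ columns of $C(j_1)$ there is a basis; equivalently, there is some column index $j_2 \neq j_1$ such that deleting it from $C(j_1)$ still leaves rank $n$, i.e. $\rk(C(j_1, j_2)) = n$. (Concretely: pick any $n$ columns of $C(j_1)$ that form a basis of $\R^n$, and let $j_2$ be the one column of $C(j_1)$ not chosen.) This is the routine part.

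Part~(iii) is where the positivity is genuinely needed and is the main obstacle. Assume $\rk(C(j_1, j_2)) = n$ and let $i$ be a third index. The $n$ columns of $C(j_1,j_2)$ span $\R^n$, so in particular $C_i$, $C_{j_1}$, $C_{j_2}$ are each in their span. The claim is that at least one of $C(j_1, i)$, $C(j_2, i)$ — each obtained from $C(j_1,j_2)$ by swapping in one of $C_{j_1}, C_{j_2}$ and removing $C_i$ — still has rank $n$. The obstruction to, say, $C(j_1,i)$ having rank $n$ is precisely that $C_{j_2}$ lies in the span of the other $n-1$ columns of $C(j_1,j_2)$ (those indexed by $[n+2]\setminus\{j_1,j_2,i\}$) together with $C_{j_1}$; similarly the obstruction to $\rk(C(j_2,i)) = n$ is that $C_{j_1}$ lies in that same span together with $C_{j_2}$. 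I would argue that these two obstructions cannot hold simultaneously: if both did, then, writing $W = \operatorname{span}\{C_k : k \notin \{j_1,j_2,i\}\}$, we would get $C_{j_1} \in W + \R C_{j_2}$ and $C_{j_2} \in W + \R C_{j_1}$, and combining with the positive relation $\sum_j v_j C_j = {\bf 0}$ — which after solving for $C_i$ gives $C_i \in W + \R C_{j_1} + \R C_{j_2}$ with \emph{nonzero} coefficients on $C_{j_1}$ and $C_{j_2}$ — one can substitute to conclude $C_i \in W + \R C_{j_1}$ (using the first obstruction to eliminate $C_{j_2}$), i.e. $\operatorname{span}\{C_k : k \neq j_2\} \subseteq W + \R C_{j_1}$, a space of dimension $\le n-1$. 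But that contradicts $\rk(C(j_2)) = n$, which holds by part~(i). Hence at least one of the two ranks is $n$.

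The delicate point to get right is the bookkeeping of \emph{which} coefficients in the derived linear relations are guaranteed nonzero — this is exactly what positivity of $v$ buys us, and it is why a naive linear-algebra argument (which would work for a generic relation but not an arbitrary one) is insufficient. I would therefore state the rewriting steps carefully in terms of the coefficients $v_{j_1}, v_{j_2} \neq 0$ rather than appealing vaguely to "generic position," and double-check the edge cases $n = 1$ (where $W = \{0\}$) and the situation where $i$ happens to index a column already in a chosen basis.
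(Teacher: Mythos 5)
Parts (i) and (ii) of your proposal are fine: for (i) you observe that the positive kernel vector lets you write $C_i$ as a combination of the columns of $C(i)$, so $\rk(C(i))=\rk(C)=n$, a slightly more direct route than the paper's (which changes coordinates and shows every kernel vector would have vanishing $i$-th coordinate, contradicting~\eqref{eq:nonempty}); (ii) is the same routine basis-extraction as in the paper.

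The gap is in (iii), in your description of when the ranks drop. The columns of $C(j_1,i)$ are $C_{j_2}$ together with the $n-1$ columns indexed by $[n+2]\setminus\{j_1,j_2,i\}$, whose span you call $W$; the column $C_{j_1}$ is \emph{not} among them. Since $\rk(C(j_1,j_2))=n$ forces $\dim W=n-1$ and $C_i\notin W$, the correct obstruction to $\rk(C(j_1,i))=n$ is $C_{j_2}\in W$, not $C_{j_2}\in W+\R\, C_{j_1}$ as you state (and symmetrically for $C(j_2,i)$). This mis-statement breaks your endgame: after eliminating $C_{j_2}$ you only get ${\rm span}\{C_k : k\neq j_2\}\subseteq W+\R\, C_{j_1}$, and this space has dimension $n$ unless $C_{j_1}\in W$, which you never establish, so there is no contradiction with $\rk(C(j_2))=n$; the asserted ``dimension $\le n-1$'' is unjustified. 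Worse, the two conditions you call obstructions can hold simultaneously while both ranks equal $n$ (already for $n=2$, where $W=\R\, C_k$ and generically $W+\R\, C_{j_1}=W+\R\, C_{j_2}=\R^2$, so both containments are automatic), so no argument can derive a contradiction from them alone. The repair is short and needs no bookkeeping of the coefficients $v_{j_1},v_{j_2}$: with the correct obstructions, if both $\rk(C(j_1,i))<n$ and $\rk(C(j_2,i))<n$, then $C_{j_1},C_{j_2}\in W$, hence every column except $C_i$ lies in the $(n-1)$-dimensional space $W$, i.e.\ $\rk(C(i))\le n-1$, contradicting (i). This is exactly the paper's argument, phrased there by normalizing $C(j_1,j_2)$ to the identity and noting that the vanishing of both minors $\det(C(j_1,i))$, $\det(C(j_2,i))$ makes $C$ a pyramid; positivity enters (iii) only through item (i).
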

\begin{proof}
Assume $\rk (C(i)) <n$ for some index $i$. This means that the column vectors $C_j, j \neq i$ lie
in a proper subspace. Therefore, we can multiply $C$ by an invertible matrix on the left,
to get a matrix $C'$ with $i$-th column equal to the $i$-th canonical vector $e_i$ while the $i$-th 
coordinate of all other columns is equal to $0$.
So, any vector in ${\rm Ker}(C')$ has its
$i$-th coodinate equal to $0$.  
But ${\rm Ker}(C) = {\rm Ker}(C')$ and this contradicts~\eqref{eq:nonempty}, proving (i).

Given an index $j_1$, as the rank of $C(j_1)$ is $n$ by (i), it has
a square submatrix of rank $n$. Any such submatrix  is of the form $C(j_1, j_2)$ for an index $j_1 \neq j_2$. 
This proves (ii).

Assume now that the matrix $C(j_1,j_2)$ has rank $n$. We can multiply $C$ on the left by 
$C(j_1,j_2)^{-1}$ and the resulting matrix $C'$ will have an {\em identity} matrix in the columns
different from $j_1, j_2$. For each $i =1, \dots, n$, the $i$-th entry of the column $C'_{j_1}$ (resp.
$C'_{j_2}$) equals $\pm \det(C'(j_2,i))$ (resp. $\pm \det(C'(j_1,i)$). 
If both $\det(C'(j_1,i)) =\det(C'(j_2,i)) =0$, we deduce that $C'$ is a pyramid, which contradicts item (i).
\end{proof}

We now translate condition~\eqref{eq:nonempty} to the {\em Gale dual} setting.
Given a full rank matrix $C \in \R^{n \times (n+c+1)}$, 
let $B \in \R^{(n+c+1) \times (c+1)}$ be a matrix whose columns generate ${\rm Ker}(C)$ and denote by
$P_0, \dots, P_{n+c} \in \R^{c+1}$ its row vectors. 
The configurations of column vectors of $C$ and row vectors of $B$ are said to
be Gale dual. The following lemma is well-known.

\begin{lemma}\label{lem:Gale}
Let $C_0, \dots, C_{n+c}$ and $P_0, \dots, P_{n+c}$ be Gale dual configurations.
Then, ${\bf 0}$ lies in the open positive cone $\mathcal C^\circ$ generated by
$C_0, \dots, C_{n+c}$ if and only if the vectors $P_0, \dots, P_{n+c}$ lie in an open
halfspace through the origin.
\end{lemma}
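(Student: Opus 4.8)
The plan is to prove the equivalence by working directly with the defining property of Gale duality: the rows $P_0,\dots,P_{n+c}$ of $B$ span the row space that is orthogonal to the kernel relations among the columns $C_0,\dots,C_{n+c}$, and more precisely there is a natural pairing in which a linear functional on $\R^n$ pulls back to a linear combination of the $P_j$. First I would record the elementary reformulation: ${\bf 0}\in\mathcal C^\circ$ means there exist strictly positive reals $\mu_0,\dots,\mu_{n+c}$ with $\sum_j \mu_j C_j = {\bf 0}$, i.e.\ the positive orthant meets $\operatorname{Ker}(C)$. On the other side, the vectors $P_0,\dots,P_{n+c}$ lie in an open halfspace through the origin means there is a nonzero $y\in\R^{c+1}$ with $\langle P_j, y\rangle > 0$ for all $j$.

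The key step is to identify these two conditions via the columns of $B$. Since the columns of $B$ generate $\operatorname{Ker}(C)$, a vector $(\mu_0,\dots,\mu_{n+c})^t$ lies in $\operatorname{Ker}(C)$ exactly when it equals $B y$ for some $y\in\R^{c+1}$; and the $j$-th coordinate of $By$ is precisely $\langle P_j, y\rangle$. Therefore $\mu_j > 0$ for all $j$ is the same as $\langle P_j,y\rangle > 0$ for all $j$. This already gives both implications at once: a strictly positive kernel vector $\mu = By$ produces the desired $y$ (nonzero because $\mu\neq{\bf 0}$ and $B$ has full column rank $c+1$, so $By={\bf 0}$ forces $y={\bf 0}$), and conversely any $y$ with all $\langle P_j,y\rangle>0$ yields $\mu = By\in\operatorname{Ker}(C)$ with all coordinates positive, whence $\sum_j \mu_j C_j = {\bf 0}$ exhibits ${\bf 0}\in\mathcal C^\circ$.

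There is essentially no serious obstacle here; the statement is "well-known" and the argument is just unwinding definitions. The only point that deserves a line of care is the injectivity of $B$ as a linear map $\R^{c+1}\to\R^{n+c+1}$, which is needed to conclude that the witness $y$ is nonzero: this holds because the columns of $B$ form a basis of $\operatorname{Ker}(C)$ (they generate it and, by the rank–nullity theorem applied to the full-rank matrix $C\in\R^{n\times(n+c+1)}$, that kernel has dimension exactly $c+1$). One could alternatively phrase the whole equivalence through Gordan's theorem or a Farkas-type separation argument, but the direct computation above is the shortest route and avoids invoking any external duality theorem.
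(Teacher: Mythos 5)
Your proof is correct and follows essentially the same route as the paper: identify a strictly positive vector in $\operatorname{Ker}(C)$ with an image $B y$, note that its $j$-th coordinate is $\langle P_j, y\rangle$, and read off the halfspace condition, with your extra remark on the injectivity of $B$ being a harmless (and welcome) additional detail. No changes needed.
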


\begin{proof} Any vector $u$ in the kernel of $C$ is of the form $B \mu$, with $\mu$  in
$\R^{c+1}$. Thus, 
$$u_i = \langle  P_i, \mu \rangle, \quad i=0, \dots, n+c.$$
As we remarked, the condition on the columns of $C$ means that there is a positive vector $u$ in the
kernel of the matrix $C$, and  this is clearly equivalent to the fact that the row vectors $P_i$ lie
in the positive halfspace defined by $\mu$, which proves the lemma.
\end{proof}

\begin{remark}\label{rem:DeltaP}
Given a full rank matrix $C \in \R^{n \times (n+c+1)}$ which satisfies~\eqref{eq:nonempty} and $P_0, \dots, P_{n+c}$ a Gale dual of $C$.
Lemma~\ref{lem:Gale} asserts that the cone $ {\mathcal C}_P =\R_{>0} P_0 + \dots + \R_{>0} P_{n+c}$ is strictly convex. Therefore its  cone
${\mathcal C}_P^\nu$ (consisting of those vectors $\mu \in \R^{c+1}$ with  $\langle  P_i, \mu \rangle > 0$ for any $i \in [n+c]$) is a non empty full dimensional open convex cone.   

When $c=1$, 
${\mathcal C}_P^\nu \subset \R^2$ is a non empty two-dimensional open convex cone.  Then, 
  $\{ y \in \R \, : \, (1,y) \in {\mathcal C}_P^\nu\}$ and 
$\{ y \in \R \, : \, (-1,y) \in {\mathcal C}_P^\nu\}$ are open intervals in $\R$ (possibly empty or infinite) and at least one of them is non empty.  
Up to replacing the first column of  the matrix $B$ by its opposite, we will assume that
\begin{equation}\label{eq:DeltaP}
\Delta_P =  \{ y \in \R \, : \, (1,y) \in {\mathcal C}_P^\nu\} \neq \emptyset.
\end{equation}
\end{remark}

\begin{definition}\label{def:alpha}
Let $C \in \R^{n \times (n+2)}$ be a full rank matrix. An ordering of $C$ is a 
bijection $\alpha: [n+2] \to 
[n+2]$ 
which verifies that  for any choice of Gale dual vectors $P_0, \dots, P_{n+1} \in \R^2$, there exists $\varepsilon \in \{1, -1\}$ such that
\begin{equation*}
\varepsilon\, \det(P_{\alpha_i}, P_{\alpha_j}) \ge 0, \text{ for any }  i < j.
\end{equation*}
\end{definition}

We then have:

\begin{prop}\label{prop:order}
Let $C \in \R^{n \times (n+2)}$ be a full rank matrix satisfying~\eqref{eq:nonempty}.
Then, there exists an ordering $\alpha$ of $C$.
\end{prop}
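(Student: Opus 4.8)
The plan is to produce the ordering $\alpha$ directly from the geometry of the Gale dual vectors $P_0,\dots,P_{n+1}\in\R^2$. Since $C$ has full rank $n$, its kernel has dimension $2$, so a Gale dual $B\in\R^{(n+2)\times 2}$ exists and the row vectors $P_0,\dots,P_{n+1}$ span $\R^2$; moreover, by~\eqref{eq:nonempty} and Lemma~\ref{lem:Gale}, these vectors all lie in an open halfplane through the origin. Consequently none of the $P_j$ is the zero vector, and there is a well-defined way to list them by increasing argument (angle) within that halfplane. Concretely, pick a linear functional that is positive on all the $P_j$ (Lemma~\ref{lem:Gale}), and sort the vectors by the slope of the ray $\R_{>0}P_j$; define $\alpha$ to be the bijection $[n+2]\to[n+2]$ that realizes this sorted order. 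Ties — indices $j,k$ with $P_j$ and $P_k$ positively proportional — can be broken arbitrarily, and I will need to check that the resulting statement does not depend on how ties are broken (it doesn't, because $\det(P_j,P_k)=0$ in that case, so the required inequality holds with either sign).

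The key step is then to verify that this $\alpha$ satisfies the defining condition of Definition~\ref{def:alpha}: that there is a global sign $\varepsilon\in\{1,-1\}$ with $\varepsilon\det(P_{\alpha_i},P_{\alpha_j})\ge 0$ for all $i<j$. Because all the $P_j$ lie in an open halfplane, their arguments lie in an interval of length strictly less than $\pi$; within such an interval the sign of $\det(P_a,P_b)$ for $P_a$ of smaller argument than $P_b$ is constant — it equals $+1$ if we have chosen the orientation so that "increasing argument" corresponds to counterclockwise rotation, and $-1$ otherwise. This is exactly the elementary fact that two nonzero planar vectors whose angle difference is less than $\pi$ have cross product of the sign dictated by their cyclic order. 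So with $\varepsilon$ chosen as this global sign, $\varepsilon\det(P_{\alpha_i},P_{\alpha_j})\ge 0$ for every $i<j$, with equality precisely when $P_{\alpha_i}$ and $P_{\alpha_j}$ are proportional.

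It remains to address the quantifier "for any choice of Gale dual vectors" in Definition~\ref{def:alpha}: the ordering $\alpha$ must work for every Gale dual, not just the one used to construct it. Any two Gale duals of $C$ differ by right multiplication by an invertible matrix $M\in GL_2(\R)$, so the two families of planar vectors are related by $P_j'=M^T P_j$ (up to relabeling), hence $\det(P_{\alpha_i}',P_{\alpha_j}')=\det(M^T)\det(P_{\alpha_i},P_{\alpha_j})$. Thus passing to a different Gale dual multiplies all the determinants by the single nonzero scalar $\det(M^T)$: if $\det M>0$ the valid sign $\varepsilon$ is unchanged, and if $\det M<0$ it flips, but either way a uniform $\varepsilon$ still exists. (If $M$ does not preserve the halfplane structure compatibly with the chosen labeling, one may first compose with the reversal $i\mapsto n+1-i$, which replaces $\alpha$ by its reversal and again only flips $\varepsilon$; this is the "up to total reversal" indeterminacy already flagged in the paper.) The main obstacle I anticipate is purely bookkeeping: making the "sort by argument in an open halfplane" construction precise enough that the constant-sign claim for the $2\times 2$ determinants is transparent, and tracking how tie-breaking and the choice of Gale dual interact with the single global sign $\varepsilon$. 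Once the halfplane picture is set up carefully, each of these is routine.
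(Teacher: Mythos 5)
Your proposal is correct and follows essentially the same route as the paper's own proof: sort the Gale dual vectors by argument inside the open halfplane provided by Lemma~\ref{lem:Gale}, then observe that any other Gale dual arises from an invertible change of basis $M$ of $\ker(C)$, which multiplies every $2\times 2$ determinant by the single nonzero factor $\det(M^t)$, so only the global sign $\varepsilon$ can flip. (Your parenthetical about composing with the reversal $i\mapsto n+1-i$ is unnecessary --- the same $\alpha$ works verbatim --- but it does not affect the validity of the argument.)
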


\begin{proof}
Let $P'_0, \dots, P'_{n+1}$ be any choice of Gale dual of $C$.
We know by
Lemma~\ref{lem:Gale} that the planar vectors $P'_0, \dots, P'_{n+1}$ lie in
an open halfspace through the origin.
We can thus order them according to
their arguments, that is, we can find a bijection $\alpha: [n+2] \to 
[n+2]$ which verifies that 
\begin{equation}\label{eq:orderprime}
\det(P'_{\alpha_i}, P'_{\alpha_j}) \ge 0, \quad \text{ for any }  i < j.
\end{equation}
It remains to prove that one of the two sign conditions in Definition~\ref{def:alpha}
holds for any other choice
of Gale dual configuration $P_0, \dots, P_{n+1}$. But if $B$ (resp. $B'$) denotes
the $(n+2)\times 2$ matrix with rows $p_i$ (resp. $p'_i$),  the colums of $B$ (resp. $B'$)
give a basis of ${\rm Ker}(C)$. Then, there is an invertible matrix $M \in \R^{2 \times 2}$
such that $B'\cdot M = B$. Then, $B^t = M^t {B'}^t$, and thus each vector 
$P_i$ equals the image of the vector $P'_i$ by the invertible linear map with matrix
$M^t$ in the canonical bases. Then, if $\det(M^t) > 0$,
$$\det(P_{\alpha_i}, P_{\alpha_j}) \ge 0, \quad \text{ for any }  i < j,$$
and if $\det(M^t) < 0$, we have that
$$ \det(P_{\alpha_i}, P_{\alpha_j}) \le 0, \quad \text{ for any }  i < j, $$
as wanted.
\end{proof}

We can translate an ordering $\alpha$ on the 
Gale dual vectors of $C$ to sign conditions
on the maximal minors of $C$.
We first recall the following well-known
result (stated as Lemma~2.10 in~\cite{MFRCSD13}, together with several references).
Let $C\in \R^{n \times (n+2)}, B\in \R^{(n+2)\times 2}$ be full-rank matrices with ${\rm im}(B)=\ker(C)$, 
so that the column vectors of $C$ and the row vectors $P_0, \dots P_{n+1}$ of $B$ are Gale dual configurations.
Then, there exists a nonzero real number $\delta$ such that
\begin{equation}\label{eq:detM3}
\delta\det(C (j_1, j_2))  =  (-1)^{j_1+j_2} \det(P_{j_1},P_{j_2}), 
\end{equation}
for all subsets $J=\{j_1, j_2\}\subseteq[n+2]$ of cardinality $2$ such that $j_1 < j_2$.
We immediately deduce:

\begin{prop}\label{prop:certif}
Let $C \in \R^{n \times (n+2)}$ be a full rank matrix satisfying~\eqref{eq:nonempty}.
A bijection $\alpha: [n+2] \to [n+2]$ is an ordering for $C$ if
and only if there exists $\varepsilon \in \{1, -1\}$ such that
$$\varepsilon \, (-1)^{\alpha_i+\alpha_j} \cdot \det(C(\alpha_i,\alpha_j)) \cdot \frac{\alpha_j-\alpha_i}{j-i} \geq 0,$$ 
for any distinct  $i,j \in [n+2]$.
%
%
\end{prop}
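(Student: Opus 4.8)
The plan is to translate the defining condition of an ordering (Definition~\ref{def:alpha}), which is phrased in terms of the signs of the $2\times 2$ determinants $\det(P_{\alpha_i},P_{\alpha_j})$ for $i<j$, directly into the language of maximal minors of $C$ via the Gale duality relation~\eqref{eq:detM3}. The only subtlety is bookkeeping: relation~\eqref{eq:detM3} is stated for pairs with $j_1<j_2$, whereas in Definition~\ref{def:alpha} the indices $\alpha_i,\alpha_j$ need not be in increasing order even when $i<j$; this is exactly what the correction factor $\frac{\alpha_j-\alpha_i}{j-i}$ is there to absorb.

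First I would fix a Gale dual $B$ with rows $P_0,\dots,P_{n+1}$ and let $\delta\neq 0$ be the constant of~\eqref{eq:detM3}. For any two distinct indices $i,j\in[n+2]$, write $\{j_1,j_2\}=\{\alpha_i,\alpha_j\}$ with $j_1<j_2$. Using the antisymmetry of the $2\times 2$ determinant, $\det(P_{\alpha_i},P_{\alpha_j}) = \sign(\alpha_j-\alpha_i)\,\det(P_{j_1},P_{j_2})$, and $C(\alpha_i,\alpha_j)=C(j_1,j_2)$ by definition (the submatrix depends only on the set of removed columns). Substituting~\eqref{eq:detM3} gives
\[
\det(P_{\alpha_i},P_{\alpha_j}) \;=\; \sign(\alpha_j-\alpha_i)\,(-1)^{j_1+j_2}\,\delta\,\det(C(j_1,j_2)) \;=\; \sign(\alpha_j-\alpha_i)\,(-1)^{\alpha_i+\alpha_j}\,\delta\,\det(C(\alpha_i,\alpha_j)),
\]
since $(-1)^{j_1+j_2}=(-1)^{\alpha_i+\alpha_j}$. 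Multiplying through by $\sign(j-i)$, which for $i<j$ is $+1$, I get that for $i<j$ the quantity $\det(P_{\alpha_i},P_{\alpha_j})$ has the same sign as $\sign\!\big(\tfrac{\alpha_j-\alpha_i}{j-i}\big)\,(-1)^{\alpha_i+\alpha_j}\,\delta\,\det(C(\alpha_i,\alpha_j))$, and more symmetrically, for \emph{any} distinct $i,j$ the expression $(-1)^{\alpha_i+\alpha_j}\det(C(\alpha_i,\alpha_j))\cdot\frac{\alpha_j-\alpha_i}{j-i}$ has a sign independent of which of $i,j$ is larger, and equals $\delta^{-1}$ times $\det(P_{\alpha_i},P_{\alpha_j})$ up to the positive factor $|j-i|^{-1}|\alpha_j-\alpha_i|$ when $i<j$.

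Now I would simply chase the equivalence. By Definition~\ref{def:alpha}, $\alpha$ is an ordering iff there is $\varepsilon\in\{1,-1\}$ with $\varepsilon\det(P_{\alpha_i},P_{\alpha_j})\ge 0$ for all $i<j$. By the displayed computation this holds iff $\varepsilon\,\delta\cdot(-1)^{\alpha_i+\alpha_j}\det(C(\alpha_i,\alpha_j))\cdot\frac{\alpha_j-\alpha_i}{j-i}\ge 0$ for all $i<j$ (the extra factor $|j-i|^{-1}|\alpha_j-\alpha_i|>0$ does not affect the sign). Setting $\varepsilon':=\varepsilon\cdot\sign(\delta)\in\{1,-1\}$, this is precisely the stated inequality for all $i<j$; and since the expression $(-1)^{\alpha_i+\alpha_j}\det(C(\alpha_i,\alpha_j))\cdot\frac{\alpha_j-\alpha_i}{j-i}$ is invariant under swapping $i$ and $j$ (both $(-1)^{\alpha_i+\alpha_j}$ and $\frac{\alpha_j-\alpha_i}{j-i}$ are), the condition for all $i<j$ is the same as the condition for all distinct $i,j$. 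Conversely, given such an $\varepsilon'$ for $C$, running the equivalence backwards with $\varepsilon:=\varepsilon'\cdot\sign(\delta)$ and invoking Proposition~\ref{prop:order} (which guarantees at least one ordering exists, and that the sign condition of Definition~\ref{def:alpha} is the intrinsic one, independent of the Gale dual chosen) shows $\alpha$ is an ordering.

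I do not expect a genuine obstacle here — the statement is essentially a dictionary entry. The one place to be careful is to verify that the sign $\varepsilon$ in Definition~\ref{def:alpha} can indeed be chosen uniformly over all Gale duals (this is part of what Proposition~\ref{prop:order} and its proof establish, via the $\det(M^t)>0$ versus $\det(M^t)<0$ dichotomy), so that the condition on the minors of $C$ — which makes no reference to any choice of $B$ — is well-posed and genuinely equivalent. The rest is the elementary sign calculus with $(-1)^{\alpha_i+\alpha_j}$ and the ordering-correction factor $\frac{\alpha_j-\alpha_i}{j-i}$ carried out above.
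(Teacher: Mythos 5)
Your proposal is correct and takes essentially the same route as the paper, which deduces the proposition immediately from the Gale duality relation~\eqref{eq:detM3} applied to Definition~\ref{def:alpha}: the sign of the constant $\delta$ is absorbed into $\varepsilon$, and the factor $\frac{\alpha_j-\alpha_i}{j-i}$ accounts for the orientation of the pair $\{\alpha_i,\alpha_j\}$, exactly as in your computation. The appeal to Proposition~\ref{prop:order} in your converse direction is superfluous (choosing $\varepsilon=\varepsilon'\cdot\sign(\delta)$ for each Gale dual, with $\delta$ depending on that dual, already gives the required sign condition), but this is harmless.
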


\subsection{The main result}\label{ssec:main}

Let $A$ and $C$ be matrices as in the Introduction which satisfy~\eqref{eq:rank} and~\eqref{eq:nonempty}. 
We choose
a subset $K \subset [n+2]$ 
{\em maximal} (under containment) such that
 $\det(C(i,j)) \neq 0$ for any distinct $i, j \in K$. Denote by $k\ge 2$ the cardinality of $K$.
Let $\alpha$ be an ordering for $C$.
Denote by $\bar{\alpha}$ the bijection $[k] \rightarrow K$  which is deduced from $\alpha$. 
Thus, $\det(P_{\bar{\alpha}_i}, P_{\bar{\alpha}_j}) > 0$ for any $i < j$ in $[k]$, or  
$\det(P_{\bar{\alpha}_i}, P_{\bar{\alpha}_j}) < 0$ for any $i < j$ in $[k]$. 

We furthermore denote for any $j \in [k]$
\begin{equation}\label{eq:Kj}
K_j = \{\ell \in [n+2] \, : \, \ell = \bar{\alpha}_j  \text{ or } \det(C(\bar{\alpha}_j,\ell)) = 0\}.
\end{equation}
Thus, we get a partition $\sqcup_{j\in [k]} K_{j} = [n+2]$ by the maximality of $K$ and Lemma~\ref{lem:first}.

We define now the following linear forms in the signed minors $\lambda_j$ of  the matrix $A$:
\begin{equation}\label{eq:l}
\bar{\lambda}_j \, = \, \sum_{\ell \in K_{j}} \lambda_{\ell} \, = \,  \sum_{\ell \in K_{j}} (-1)^{\ell+1} \det(A(\ell)), \quad j \in [k].
\end{equation}
This set of $k$ linear forms is independent of the choice of subset $K$.
Observe also that $k=n+2$ (that is,  $K = [n+2]$) if and only if
all maximal minors of $C$ are nonzero, that is, if $C$ is {uniform}. 
The definition of $\bar{\lambda}_j, j=0,\dots,k-1,$ will be clear in display~\eqref{eq:lambdabar}.

Moreover, we define the (ordered) sequence:
\begin{equation}\label{eq:salpha}
s_\alpha \, =  \, (\bar{\lambda}_{0},\ldots,\bar{\lambda}_{{k-1}}).
\end{equation}

\begin{remark} Note that the partition $K_1, \dots, K_k$ of $[n+2]$ does {\em not} depend on the choice of $\alpha$ nor the choice of $K$. 
In fact, the subsets $K_j$ are in bijection with the equivalence classes of the following relation on the Gale dual vectors:
$P_j$ and $P_\ell$ are equivalent if they are linearly dependent (and they lie in the same ray through the origin).
The ordering $\alpha$ is unique up to reordering the elements in each $K_j$ (and up to complete reversal); in particular, 
it is unique up to reversal when $C$ is uniform. Thus, $K$ gives a choice of one representative in each class, 
picked up by the restricted map $\bar{\alpha}$, which moreover gives an ordering on the rays contaning some $P_j$ according to their angle.
So, up to complete reversal, $s_\alpha$ only depends on the matrix $C$.
\end{remark}  

Recall that ${\rm vol}_{\Z \A}(\A)$ denotes the normalized volume defined in~\eqref{eq:vol}. We are ready to state our main result:

\begin{thm}\label{thm:main}
Let $\A=\{w_0,w_1,\ldots,w_{n+1}\} \subset \Z^n$, $A \in \Z^{(n+1) \times(n+2)}$ 
and $C \in \R^{n \times (n+2)}$ satisfying~\eqref{eq:rank} and~\eqref{eq:nonempty}. Let $\alpha$ be an ordering for $C$, 
and let $K, |K| = k \le n+2$, and the linear forms $\bar{\lambda}_j, j \in [k],$ be as above.

Then, if $n_A(C)$ is finite,
\begin{equation}\label{Eq:mainDescartes}
n_A(C)  \leq sgnvar(s_\alpha),
\end{equation}
where $s_\alpha$ is the ordered sequence of linear forms in the maximal minors of $A$ defined in~\eqref{eq:salpha}.
Moreover,
\begin{equation}\label{Eq:mainDescartesvol}
n_A(C) \leq  {\rm min} \{sgnvar(s_\alpha), {\rm vol}_{\Z \A}(\A)\}.
\end{equation}
\end{thm}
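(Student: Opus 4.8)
The plan is to use Gale duality to convert the system~\eqref{E:system} into a single univariate rational function, and then bound its number of positive roots via a Descartes-type rule for the ordered sequence of coefficients. First I would pass to the Gale dual setting: since $C$ has rank $n$ and satisfies~\eqref{eq:nonempty}, Lemma~\ref{lem:Gale} and Remark~\ref{rem:DeltaP} give a Gale dual matrix $B\in\R^{(n+2)\times 2}$ with row vectors $P_0,\dots,P_{n+1}$ lying in an open halfspace and with $\Delta_P\neq\emptyset$. The key point of Gale duality here is that positive solutions $x\in\R_{>0}^n$ of~\eqref{E:system} are in bijection (respecting multiplicity) with solutions in $\Delta_P$ of a certain univariate equation: writing the monomial vector $(x^{w_0},\dots,x^{w_{n+1}})$ as a positive element $B\mu$ of $\ker(C)$, and normalizing $\mu=(1,y)$, one uses the affine relation~\eqref{E:affinerelation} among the $w_\ell$ to encode $x$ back in terms of $y$. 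Concretely, the logarithms $\log x^{w_\ell}=\langle P_\ell,(1,y)\rangle$ must be consistent with a single $x$, and the compatibility condition is exactly that $y$ is a root of
$$\varphi(y)\;=\;\prod_{\ell=0}^{n+1}\bigl(\langle P_\ell,(1,y)\rangle\bigr)^{\tilde\lambda_\ell}\;-\;c,$$
or rather (clearing the negative exponents) a rational function whose positive roots in $\Delta_P$ biject with positive solutions. One must verify that $n_A(C)$ finite is what guarantees $\varphi$ is not identically a constant, so this counting makes sense.

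Next I would apply the announced generalization of Descartes' rule of signs for vector spaces of real analytic functions. The point is that the functions $y\mapsto\langle P_\ell,(1,y)\rangle$ are affine, and grouping them according to the partition $K_0,\dots,K_{k-1}$ of $[n+2]$ (classes of parallel $P_\ell$, equivalently the fibers of the map $\ell\mapsto$ ray of $P_\ell$) turns the product into $\prod_{j\in[k]}\ell_j(y)^{\bar\lambda_j}$ up to a positive constant, where $\ell_j$ is the affine form attached to the common ray of $K_j$ and $\bar\lambda_j=\sum_{\ell\in K_j}\lambda_\ell$ as in~\eqref{eq:l}. Since $\bar\alpha$ orders the rays by angle, the zeros (with sign of the leading behaviour) of these $k$ affine forms are linearly ordered on $\Delta_P$, and the logarithmic derivative of $\prod_j\ell_j^{\bar\lambda_j}$ is a sum $\sum_j \bar\lambda_j/\ell_j(y)$; the number of its zeros in $\Delta_P$, hence the number of critical points and ultimately the number of solutions of $\varphi=0$ minus one, is controlled by $sgnvar(\bar\lambda_0,\dots,\bar\lambda_{k-1})=sgnvar(s_\alpha)$. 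This is where the Descartes-type rule for analytic function spaces does the real work, playing the role that the sign-variation count of coefficients plays in the classical one-dimensional statement. I expect this to be the main obstacle: one needs the precise formulation of the multivariate/analytic Descartes rule, the verification that the chosen ordering $\alpha$ (via $\bar\alpha$ on $K$) produces exactly the monotone arrangement of zeros required by that rule, and a careful bookkeeping of signs so that $\varepsilon$ in Definition~\ref{def:alpha} does not affect the count $sgnvar(s_\alpha)$ (it does not, since total reversal of a sequence preserves its number of sign variations).

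Then I would record the identification making the definition~\eqref{eq:l} transparent: rewriting the affine relation $\sum_\ell\lambda_\ell w_\ell=0$, $\sum_\ell\lambda_\ell=0$ in terms of the $P_\ell$'s and collecting terms over each class $K_j$ yields the expression referred to in display~\eqref{eq:lambdabar} for $\bar\lambda_j$ as the exponent of $\ell_j$ in $\varphi$; this both justifies~\eqref{eq:l} and shows that $s_\alpha$ depends only on $C$ up to reversal (as noted in the Remark preceding the theorem), and that the $k$ linear forms are independent of the choice of $K$. Finally, for the refined bound~\eqref{Eq:mainDescartesvol}, I would simply combine~\eqref{Eq:mainDescartes} with the volume bound $n_A(C)\le{\rm vol}_{\Z\A}(\A)=\sum_{\lambda_i>0}\tilde\lambda_i$ already established in Section~\ref{ssec:A} from the Kushnirenko/BKK bound (the number of isolated complex solutions bounds the number of positive real ones); taking the minimum of the two upper bounds is immediate. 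The only subtlety to double-check there is that the volume bound is valid precisely under the finiteness hypothesis on $n_A(C)$, which is exactly the standing assumption in the statement, so no extra work is needed.
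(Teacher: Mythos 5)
Your overall route --- Gale duality to reduce to a univariate equation $g(y)=1$ on $\Delta_P$, then the P\'olya--Szeg\H{o} generalized Descartes rule applied to the reciprocals of the linear forms $p_{\bar{\alpha}_j}$, then Rolle --- is the paper's route, but the execution has a genuine gap at the decisive step. First, the logarithmic derivative of $\prod_j p_{\bar{\alpha}_j}(y)^{\bar{\lambda}_j}$ is not $\sum_j \bar{\lambda}_j/p_{\bar{\alpha}_j}(y)$: differentiating brings out the slope $P_{\bar{\alpha}_j,2}$ of each linear form, so the coefficient multiplying $1/p_{\bar{\alpha}_j}$ is $\bar{\lambda}_j P_{\bar{\alpha}_j,2}$ (this is exactly display~\eqref{eq:lambdabar}), and these slopes are not all of one sign, so the relevant sign variation is \emph{not} $sgnvar(s_\alpha)$ for an arbitrary Gale basis. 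Second, and more seriously, one application of the Descartes rule to $G'$ followed by Rolle can only ever give ``(number of zeros of $G'$) plus one''; your own phrasing bounds $n_A(C)-1$ by a sign variation, i.e., it proves at best $n_A(C)\le sgnvar(s_\alpha)+1$, which is strictly weaker than~\eqref{Eq:mainDescartes}.

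The paper closes this gap with an idea absent from your proposal: the Gale basis is renormalized in $k$ different ways, one for each $i\in[k]$, so that $P_{\bar{\alpha}_i}=(1,0)$; then the $i$-th coefficient of $G'$ vanishes and the slopes $P_{\bar{\alpha}_r,2}$ are negative for $r<i$ and positive for $r>i$ (or the reverse), giving the bound $n_A(C)\le 1+s_i$ with $s_i=sgnvar(-\bar{\lambda}_0,\ldots,-\bar{\lambda}_{i-1},\bar{\lambda}_{i+1},\ldots,\bar{\lambda}_{k-1})$. The purely combinatorial Lemma~\ref{L:sgnvar} then shows that the minimum over $i$ of $1+s_i$ equals $sgnvar(s_\alpha)$ when the latter is nonzero, and this is what kills the spurious $+1$. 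The step is not optional: for $s_\alpha=(+,+,-)$ the single normalization $i=0$ gives $1+sgnvar(\bar{\lambda}_1,\bar{\lambda}_2)=2$, while the theorem asserts the bound $1$. The volume half of~\eqref{Eq:mainDescartesvol} is unproblematic (either by BKK as you say, or, as in the paper, because $g(y)=1$ is equivalent to a nonzero polynomial equation of degree at most ${\rm vol}_{\Z\A}(\A)$), and your remark that $n_A(C)<\infty$ guarantees $g\not\equiv 1$ is correct; the missing minimum-over-normalizations argument together with Lemma~\ref{L:sgnvar} is the real obstacle.
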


At a first glance, one might think that the right member of 
\eqref{Eq:mainDescartes} depends only on the exponent vectors $\A$,
but this is not the case since the bijection $\alpha$ is determined by the coefficient matrix $C$.
This is in a sense dual to the statement of the classical Descartes' rule of signs 
for univariate polynomials with respect to a monomial basis, 
where the exponents determine the ordering, while the sign variation is about the coefficients. Interestingly enough, 
we also get a congruence modulo $2$ as in the classical Descartes' rule of signs.

\begin{example}
In case $n=1$, Theorem \ref{thm:main} is of course equivalent to the classical Descartes' rule of signs applied to 
trinomials in one variable.
Given $f(x)=c_0 x^{w_0}+c_1 x^{w_1}+c_2 x^{w_2} \in \R[x]$ a trinomial in one variable, where $w_0 < w_1 < w_2$, $n_A(C)$ is the number of positive roots of $f$ (counted with multiplicity).
In any affine relation $\lambda_0 \, w_0+\lambda_1 \, w_1+\lambda_2 \, w_2=0$,
the coefficients $\lambda_0$ and $\lambda_2$ have the same sign, which is opposite to that of $\lambda_1$. Note that always $ {\rm vol}_{\Z \A}(\A)\ge 2 = n+1$.
Assume $c_0, c_1, c_2 \neq 0$. A Gale dual of
the coefficient matrix $C$ is given by the vectors $P_0=(c_1,c_2), P_1=(-c_0,0), P_2=(0, -c_0)$. 
Considering the possible signs of the coefficients and the corresponding ordering $\alpha:[3] \to [3]$, it is straightforward to check that $sgnvar(s_\alpha) = sgnvar(c_0,c_1, c_2)$,
which is the classical Descartes' bound. The comparison is immediate if some $c_i=0$.
\end{example}

\begin{example}
Given any $n$, 
we may translate $\A$ and assume that $w_0=(0,\ldots,0)$. 
In case $\A$ has cardinality $n+2$,Theorem~1.5 in \cite{MFRCSD13} asserts the following:
If $\mbox{det} \; (A(j)) \cdot  \mbox{det} \; (C(0,j)) \geq 0$
for $j=1,\ldots,n+1$ and at least one of the inequalities $\mbox{det} \; (A(j)) \cdot  \mbox{det} \; (C(0,j)) \neq 0$ holds, then $n_A(C) \leq 1$.

We can assume that $C$ has maximal rank and satisfies condition~\ref{eq:nonempty}, since otherwise $n_A(C)=0$, 
and that $\A$ is a circuit, that is, $\lambda_j= (-1)^j \det(A(j)) \neq 0$ for all $j$. Consider the vector 
$$B^0=(0, - \det(C(0,1)), \det(C(0,2)), \dots,
(-1)^{n+1} \det(C(0,n+1))^t $$
in ${\rm ker}(C)$.
Take a choice of Gale dual $P_0, \dots, P_{n+1}$ of $C$ for which the coefficients of $B^0$ give the second coordinates of the $P_j$ 
and let $\alpha$ be an ordering of $C$. As the vectors $P_j$ lie in an open halfspace, $\alpha$ will indicate first all indices $j$ with $B^0_j < 0$, then $j=0$ and then those with
 $B^0_j >0$, if any (or just the opposite). Assume first that all pairs $P_i, P_j$ are linearly independent. 
 It follows that
for each $j \ge 1$, $B^0_j$ is nonzero and has the same sign as
the coefficient $\lambda_j$. When we consider the sign variation $sgnvar(s_\alpha)$ we will be then taking the sign variation of 
those coefficients $\lambda_j$ which are negative, and then those which are positive, if any. It follows that $sgnvar(s_\alpha) \le 1$ and we get $n_A(C) \le 1$  
via Theorem~\ref{thm:main}. When $P_j, P_\ell$ are linearly dependent, as the Gale configuration lies in an open halfspace, we deduce that $B^0_j$ has the same sign
as $B^0_\ell$. So, when we compute the linear forms $\bar{\lambda}_j$ we
add numbers with this same sign and therefore we also get that our bound~\eqref{Eq:mainDescartesvol} reads $n_A(C)\le1$.
\end{example}

In order to state a sufficient condition for the sign variation of the sequence $s_\alpha$ and $n_A(C)$ to have the same parity, 
we introduce the following property on the configuration $\A$. We say that $\A$ {\it has a Cayley structure}
if $\A$ is contained in two parallel hyperplanes. This is equivalent to the fact that there exists a nonzero vector
with $0,1$ entries other than the all-one vector $(1, \dots,1)$ which lies in the $\Q$-row span of the matrix $A$. 

\begin{prop}\label{congruence}
With our previous assumptions about $A$ and $C$ and our previous notations, assume moreover 
that both $\bar{\lambda}_0, \bar{\lambda}_{k-1}$ are different from $0$. Then, 
the difference $sgnvar(s_\alpha)-n_A(C)$ in ~\eqref{Eq:mainDescartes}  is an even 
integer number. Thus, $n_A(C) > 0$ if $sgnvar(s_\alpha)$ is odd.

In particular, the difference is even for any $C$ when $A$ does not have a Cayley structure, and it is even for any $A$ when
$C$ is uniform.
\end{prop}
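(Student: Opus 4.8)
The plan is to reduce the congruence statement to a parity statement about the number of roots of the univariate rational function that appears in the Gale-dual reduction (the same reduction underlying the proof of Theorem~\ref{thm:main}). Via Gale duality, $n_A(C)$ equals the number of solutions $y\in\Delta_P$ of a single equation $\prod_{\ell\in[n+2]} L_\ell(y)^{\tilde\lambda_\ell}=1$ (or, clearing denominators, the number of roots in $\Delta_P$ of a rational function $R(y)=\prod_{\ell:\tilde\lambda_\ell>0}L_\ell(y)^{\tilde\lambda_\ell}-\prod_{\ell:\tilde\lambda_\ell<0}L_\ell(y)^{-\tilde\lambda_\ell}$), where $L_\ell(y)=\langle P_\ell,(1,y)\rangle$ are affine forms, grouped by the partition $K_0,\dots,K_{k-1}$ into $k$ distinct-up-to-positive-scalar linear factors. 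Ordering the $\bar\alpha_j$ by argument orders the zeros of these grouped factors on the real line, and the exponent attached to the $j$-th grouped factor is exactly $\bar\lambda_j$ (this is the content of the forthcoming display~\eqref{eq:lambdabar}). The generalized Descartes rule for spaces of analytic functions gives the bound $sgnvar(s_\alpha)$; for the parity I would instead track the sign of $R$ (suitably normalized) at the two ends of the interval $\overline{\Delta_P}$.

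The key steps, in order: (1) Recall the Gale-dual reduction and fix the interval $\Delta_P=(d_-,d_+)$ from Remark~\ref{rem:DeltaP}; write $n_A(C)$ as the number of zeros, counted with multiplicity, of an analytic function $g$ on $\Delta_P$ which is a signed monomial in the affine forms $\ell_j(y)$ ($j\in[k]$) corresponding to the grouped factors, minus its other half, or more conveniently of $h(y)=\log\bigl(\prod_{\bar\lambda_j>0}\ell_j^{\bar\lambda_j}\bigr)-\log\bigl(\prod_{\bar\lambda_j<0}\ell_j^{-\bar\lambda_j}\bigr)$ away from the walls. (2) Observe that the ordering places the zeros $z_0\le\cdots\le z_{k-1}$ of $\ell_0,\dots,\ell_{k-1}$ monotonically along the $y$-axis, and that none of these zeros lies in the open interval $\Delta_P$ (since each $P_j$ lies in $\mathcal C_P^\nu$-positivity forces $\ell_j>0$ on $\Delta_P$ after the global sign normalization), so $h$ is real-analytic on all of $\Delta_P$. (3) Compute the limiting signs of $h$ (equivalently of $g$) as $y\to d_-^+$ and $y\to d_+^-$: near an endpoint one or more of the $\ell_j$ vanish, and the dominant behavior is governed by the sum of the exponents $\bar\lambda_j$ of the factors vanishing there, i.e.\ by a partial sum $\bar\lambda_0+\cdots+\bar\lambda_r$ (at $d_-$) or $\bar\lambda_{r'}+\cdots+\bar\lambda_{k-1}$ (at $d_+$). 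The hypothesis $\bar\lambda_0\neq0$, $\bar\lambda_{k-1}\neq 0$ guarantees these dominant exponents are nonzero, so the signs at the two ends are well defined and are exactly $\sign(\bar\lambda_0)$-type and $\sign(\bar\lambda_{k-1})$-type data; a short check (the same bookkeeping as in the proof of the bound) shows that the parity of the difference $sgnvar(s_\alpha)-(\text{number of zeros of }h)$ equals the parity predicted by comparing these two end-signs, because consecutive sign changes in $s_\alpha$ correspond to sign changes of $h$ and an even/odd count is forced by the boundary behavior — this is a standard intermediate-value/derivative-counting argument (Rolle plus the sign rule), the analytic version of the parity half of Descartes. (4) Conclude $sgnvar(s_\alpha)-n_A(C)\in 2\Z$, and hence $n_A(C)>0$ whenever $sgnvar(s_\alpha)$ is odd.

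For the final two assertions: when $C$ is uniform we have $k=n+2$, so $\bar\lambda_j=\lambda_j\neq0$ for all $j$ (the circuit hypothesis), in particular $\bar\lambda_0,\bar\lambda_{k-1}\neq0$, and the congruence applies to every such $C$. When $A$ does not have a Cayley structure, I would argue that $\bar\lambda_0=0$ or $\bar\lambda_{k-1}=0$ would force a $0$--$1$ vector other than $(1,\dots,1)$ in the row span of $A$: indeed $\bar\lambda_j=\sum_{\ell\in K_j}\lambda_\ell$, and vanishing of such a partial sum of the coprimized affine relation, for an extreme block $K_0$ or $K_{k-1}$ of the ordering, produces (after the usual translation between affine relations and facial structure / Gale duality) a splitting of $\A$ into two parallel hyperplanes, i.e.\ a Cayley structure — contradiction; so for non-Cayley $A$ the hypothesis of the proposition holds for every $C$.

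\textbf{Expected main obstacle.} The delicate point is Step~(3): correctly identifying the sign of $g$ (or $h$) at each endpoint of $\Delta_P$ when several affine forms $\ell_j$ vanish simultaneously there, and showing the outcome depends only on the sign of the dominant partial sum $\bar\lambda_0+\dots$ (resp.\ $\dots+\bar\lambda_{k-1}$), together with cleanly matching this to the parity of $sgnvar(s_\alpha)$ — one must be careful that the two halves of $g$ (positive-exponent product and negative-exponent product) can both blow up or vanish at an endpoint, and extract the leading term. The rest is bookkeeping already developed for the proof of Theorem~\ref{thm:main}.
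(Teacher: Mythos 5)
Your route is essentially the paper's: via Gale duality, $n_A(C)$ is the number of solutions of $g(y)=1$ on $\Delta_P$, where, grouping the factors by the partition $K_0,\dots,K_{k-1}$, $g$ is a positive constant times $\prod_{r\in[k]}p_{\bar\alpha_r}(y)^{\bar\lambda_r/I}$; the parity of $n_A(C)$ is read off from the signs of $g-1$ near the two endpoints of $\Delta_P$, and it matches the parity of $sgnvar(s_\alpha)$ because the first and last entries $\bar\lambda_0,\bar\lambda_{k-1}$ of $s_\alpha$ are nonzero. Your handling of the two special cases is also the paper's: for uniform $C$ one has $k=n+2$ and $\bar\lambda_j=\lambda_j\neq0$ since $\A$ is a circuit; for non-Cayley $A$, a vanishing partial sum $\sum_{\ell\in K_j}\lambda_\ell$ would make the $0$--$1$ indicator vector of $K_j$ orthogonal to $\lambda$, hence lie in the row span of $A$, i.e.\ give a Cayley structure.

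The step you flag as the main obstacle is, however, misstated, and as written it would not be covered by the hypothesis, so it needs to be fixed rather than left as bookkeeping. At an endpoint of $\Delta_P$ exactly one of the grouped forms $p_{\bar\alpha_0},\dots,p_{\bar\alpha_{k-1}}$ vanishes, never several: these forms are pairwise non-proportional (that is what $\det(C(\bar\alpha_i,\bar\alpha_j))\neq0$ for $i\neq j$ in $[k]$ means via \eqref{eq:detM3}), and the walls of ${\mathcal C}_P^\nu$ are orthogonal to the extreme rays of ${\mathcal C}_P$, which are spanned by $P_{\bar\alpha_0}$ and $P_{\bar\alpha_{k-1}}$; hence the endpoints of $\Delta_P$ are precisely the roots of $p_{\bar\alpha_0}$ and $p_{\bar\alpha_{k-1}}$ (with $\infty$ as the root of a constant form in the unbounded case), and no $p_{\bar\alpha_j}$ with $0<j<k-1$ can vanish there, else $P_{\bar\alpha_j}$ would be proportional to an extreme $P_{\bar\alpha_r}$. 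What does vanish simultaneously are the ungrouped forms $p_\ell$ for $\ell\in K_0$ (resp.\ $K_{k-1}$), but these are positive multiples of one another and their exponents $\tilde\lambda_\ell$ sum to $\bar\lambda_0/I$ (resp.\ $\bar\lambda_{k-1}/I$). So the dominant exponent at an endpoint is $\bar\lambda_0$ or $\bar\lambda_{k-1}$ alone, never a longer partial sum $\bar\lambda_0+\cdots+\bar\lambda_r$; this is exactly why the hypothesis $\bar\lambda_0,\bar\lambda_{k-1}\neq0$ suffices. Then $g$ tends to $0$ or $+\infty$ at each endpoint according to the sign of that single coefficient, so at the endpoint where $p_{\bar\alpha_0}$ (resp.\ $p_{\bar\alpha_{k-1}}$) vanishes the sign of $g-1$ is $-\sign(\bar\lambda_0)$ (resp.\ $-\sign(\bar\lambda_{k-1})$), and your intermediate-value parity argument, together with the observation that $sgnvar(s_\alpha)\equiv sgnvar(\bar\lambda_0,\bar\lambda_{k-1}) \pmod 2$, closes the proof exactly as in the paper.
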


We now state the following general result, which gives a necessary and sufficient condition for
$n_A(C)$ to be finite as required in the statement of Theorem~\ref{thm:main}.

\begin{prop}\label{P:noloss}
Let $A,C$ of maximal rank. Assume $n_A(C) >0$ and let
$d =(d_0, \dots, d_{n+1})$ be 
a nonzero vector in ${\rm ker}(C)$. Let $\alpha$ be an ordering of $C$.
Then $n_A(C)$ is finite if and only if
there exists $r \in [k]$ such that $d_{{\bar{\alpha}_r}} \cdot \bar{\lambda}_{r} 
\neq 0$.
\end{prop}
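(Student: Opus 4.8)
The plan is to use Gale duality to convert the system~\eqref{E:system} into a single univariate rational equation, following the strategy announced in the Introduction, and then to recognize $n_A(C)$ as (a lower bound is automatic; for the upper bound we count with multiplicity) the number of solutions of that equation in a suitable open interval. Concretely, let $B \in \R^{(n+2)\times 2}$ be a matrix whose columns span $\ker(C)$, with rows $P_0, \dots, P_{n+1}$, normalized as in Remark~\ref{rem:DeltaP} so that $\Delta_P \ne \emptyset$. For $x \in \R_{>0}^n$ a solution of~\eqref{E:system}, the vector $v(x) = (x^{w_0}, \dots, x^{w_{n+1}})$ is a \emph{positive} element of $\ker(C)$, hence of the form $B\mu$ for a unique $\mu$ in the open cone $\calC_P^\nu$; rescaling, we may take $\mu = (1,y)$ with $y \in \Delta_P$, so $x^{w_\ell} = \langle P_\ell, (1,y)\rangle$ for all $\ell$. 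Taking the monomial in the affine relation~\eqref{E:affinerelation} and using that the $\tilde\lambda_\ell$ (equivalently $\lambda_\ell$) form an affine relation, the identity $\prod_\ell (x^{w_\ell})^{\lambda_\ell} = 1$ shows that $y$ must be a root of the univariate rational function
\begin{equation*}
\varphi(y) \;=\; \prod_{\ell=0}^{n+1} \langle P_\ell, (1,y)\rangle^{\lambda_\ell} \;-\; 1
\end{equation*}
on $\Delta_P$. Conversely, standard Gale duality (together with ${\rm rk}(A) = n+1$, which makes the map $x \mapsto (x^{w_0}, \dots, x^{w_{n+1}})$, up to the torus action, a bijection onto the relevant set) shows that solutions $x$ correspond to roots $y \in \Delta_P$, with matching multiplicities.

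The key observation is then that the linear factors $\langle P_\ell, (1,y)\rangle$ are grouped exactly according to the partition $K_0, \dots, K_{k-1}$ of $[n+2]$: the $P_\ell$ with $\ell \in K_j$ all lie on the ray through $P_{\bar\alpha_j}$, so $\langle P_\ell, (1,y)\rangle$ is a positive scalar multiple of $\langle P_{\bar\alpha_j}, (1,y)\rangle$ throughout $\Delta_P$. Hence on $\Delta_P$,
\begin{equation*}
\prod_{\ell=0}^{n+1} \langle P_\ell, (1,y)\rangle^{\lambda_\ell} \;=\; (\text{positive constant}) \cdot \prod_{j=0}^{k-1} \langle P_{\bar\alpha_j}, (1,y)\rangle^{\bar\lambda_j},
\end{equation*}
since $\bar\lambda_j = \sum_{\ell \in K_j}\lambda_\ell$; this is the content of the promised display~\eqref{eq:lambdabar}. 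Now $n_A(C) = \infty$ precisely when $\varphi$ vanishes identically on the interval $\Delta_P$ (by analyticity on the connected interval, infinitely many roots forces $\varphi \equiv 0$, and conversely if $\varphi \equiv 0$ on an interval there are infinitely many positive solutions, since $n_A(C) > 0$ guarantees $\Delta_P$ is a genuine nonempty interval, not a point). Because each $\langle P_{\bar\alpha_j},(1,y)\rangle$ is a nonconstant affine function of $y$ on $\R$ (the $P_{\bar\alpha_j}$ are pairwise linearly independent, so no two are parallel, and in particular at most one of them is a horizontal vector $(\ast,0)$), the product $\prod_j \langle P_{\bar\alpha_j},(1,y)\rangle^{\bar\lambda_j}$ is a nonconstant rational function of $y$ whenever some exponent $\bar\lambda_j \ne 0$; in that case $\varphi$ cannot be identically $1$. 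So $n_A(C)$ is finite as soon as some $\bar\lambda_r \ne 0$. To sharpen this to the stated criterion involving $d$, I would observe that $d = B\mu_0$ for some $\mu_0 \in \R^2$, so $d_{\bar\alpha_r} = \langle P_{\bar\alpha_r}, \mu_0\rangle$, and $d_{\bar\alpha_r} \ne 0$ exactly when $P_{\bar\alpha_r}$ is not parallel to $\mu_0$. The condition ``$d_{\bar\alpha_r}\cdot\bar\lambda_r \ne 0$ for some $r$'' thus says that among the rays with nonzero exponent $\bar\lambda_r$, at least one is not the single exceptional direction that would make its affine factor constant in the relevant affine chart; a direct bookkeeping argument, distinguishing whether the chart $(1,y)$ or $(-1,y)$ is being used and which $P_{\bar\alpha_r}$ (if any) is horizontal, shows this is equivalent to $\prod_j \langle P_{\bar\alpha_j},(1,y)\rangle^{\bar\lambda_j}$ being nonconstant on $\Delta_P$, hence to $\varphi \not\equiv 0$.

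The main obstacle I anticipate is the careful handling of the degenerate directions and the choice of chart: I must rule out the possibility that every factor with a nonzero exponent becomes a nonzero constant on $\Delta_P$ (which would happen only if all such $P_{\bar\alpha_j}$ were parallel to $\mu_0$, forcing them equal as rays, contradicting that distinct $\bar\alpha_j$ index distinct rays) and, symmetrically, that the product of a nonconstant collection of factors could still telescope to a constant — this cannot occur because the factors correspond to \emph{distinct} lines in $\R^2$ (distinct rays), and a finite product of powers of distinct nonproportional affine forms is constant only if all the exponents vanish. The other point requiring care is the equivalence ``$n_A(C) = \infty \iff \varphi \equiv 0$ on $\Delta_P$'': the forward direction needs that the solution set, being a real analytic subvariety of the torus defined by finitely many equations, is finite unless it is positive-dimensional, in which case its image in the $y$-line is either all of $\Delta_P$ or, a priori, a point — but a point would give only finitely many $x$ by the ${\rm rk}(A) = n+1$ bijectivity, so positive dimension does force $\varphi \equiv 0$. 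Everything else is routine linear algebra with the Gale dual.
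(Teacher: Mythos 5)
Your overall strategy is the same as the paper's: use Gale duality to reduce to the univariate function $g(y)=\prod_\ell p_\ell(y)^{\tilde\lambda_\ell}$ on $\Delta_P$, group the linear factors according to the partition $K_0,\dots,K_{k-1}$ so that the exponents become the $\bar\lambda_j$, and characterize infinitude of $n_A(C)$ (given $n_A(C)>0$) as $g$ being identically $1$ on $\Delta_P$. The genuine gap is in the final translation between ``$\prod_j p_{\bar\alpha_j}^{\bar\lambda_j}$ is nonconstant'' and the stated condition on $d$. You identify $d_{\bar\alpha_r}=\langle P_{\bar\alpha_r},\mu_0\rangle\neq 0$ (where $d=B\mu_0$) with ``$P_{\bar\alpha_r}$ avoids the single exceptional direction that would make its factor constant in the chart''. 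That identification is incorrect: the factor $p_{\bar\alpha_r}(y)=\langle P_{\bar\alpha_r},(1,y)\rangle$ is constant in $y$ exactly when $P_{\bar\alpha_r,2}=0$, i.e.\ when $P_{\bar\alpha_r}$ is proportional to $(1,0)$ --- a condition determined by the chart --- whereas $d_{\bar\alpha_r}=0$ says $P_{\bar\alpha_r}$ annihilates the fixed vector $\mu_0$, an unrelated direction in general. So from ``$d_{\bar\alpha_r}\bar\lambda_r\neq 0$ for some $r$'' you cannot conclude that this particular factor is nonconstant, and conversely the vanishing of all $d_{\bar\alpha_r}\bar\lambda_r$ does not by itself force the nonconstant factors to drop out. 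Likewise your blanket claim ``each $\langle P_{\bar\alpha_j},(1,y)\rangle$ is nonconstant'' is false when one $P_{\bar\alpha_j}$ is horizontal, which your own parenthetical allows.

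What actually closes the gap --- and is also what makes the paper's terse argument via $G'=(\log g)'$ work --- is an ingredient you never invoke: $\sum_{j\in[k]}\bar\lambda_j=\sum_\ell\lambda_\ell=0$. Hence if some $\bar\lambda_j\neq 0$ then at least \emph{two} are nonzero; since the $P_{\bar\alpha_j}$, $j\in[k]$, are pairwise linearly independent, at most one of them is horizontal (so at least one nonconstant factor carries a nonzero exponent, and your ``no telescoping'' remark then gives nonconstancy of the product) and at most one annihilates $\mu_0$ (so some $d_{\bar\alpha_r}\bar\lambda_r\neq 0$; this also shows the criterion is independent of the choice of $d$, which your write-up tacitly needs). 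If instead all $\bar\lambda_j=0$, both conditions fail and $g$ is constant, hence $\equiv 1$ since $n_A(C)>0$. Without the sum-zero observation your chain of equivalences breaks exactly in the (never excluded) corner case of a single nonzero $\bar\lambda_j$. A secondary imprecision: the clean bijection ``positive solutions $\leftrightarrow$ roots $y\in\Delta_P$'' uses the paper's adapted choices (translate $\A$ so that $w_{\bar\alpha_i}=0$ and pick a Gale basis with $P_{\bar\alpha_i}=(1,0)$, $P_{\bar\alpha_j}=(0,1)$, which forces $\mu_1=x^{w_{\bar\alpha_i}}>0$, i.e.\ ${\mathcal C}_P^\nu\subset\{\mu_1>0\}$, and makes every root of $g=1$ lift to a unique solution); with a basis only normalized as in Remark~\ref{rem:DeltaP}, a solution may correspond to a ray with $\mu_1\le 0$, so ``rescaling, we may take $\mu=(1,y)$'' is not automatic --- for the finiteness statement this is repairable by treating both affine charts, but it should be said.
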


The proofs of Theorem~\ref{thm:main}, Proposition~\ref{congruence} and Proposition~\ref{P:noloss} will be given
in Section~\ref{sec:proofs}.

\section{Bounds and signature of the circuit}\label{sec:coro}
 Along this section, we keep the previous notations. In particular,
 we have a configuration $\A  \subset \Z^n$, $A \in \Z^{(n+1) \times(n+2)}$ 
and $C \in \R^{n \times (n+2)}$ as in the Introduction. We assume that $A, C$ satisfy
~\eqref{eq:rank} and the necessary condition~\eqref{eq:nonempty}. Let $\alpha$ be an ordering for $C$. 

The first consequence of Theorem~\ref{thm:main} is the following.

\begin{corollary}\label{cor:kminus1}
Let $k$ be the cardinality of a maximal subset $K \subset [n+2]$ with $\det(C(i,j)) \neq 0$ for any distinct $i, j \in K$.
Then
\begin{equation}\label{Eq:mainDescartesCoro}
n_A(C)  \leq  k -1. 
\end{equation}
\end{corollary}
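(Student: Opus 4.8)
The plan is to derive Corollary~\ref{cor:kminus1} directly from Theorem~\ref{thm:main}, essentially as an immediate consequence of the length of the sequence $s_\alpha$. Recall that $s_\alpha = (\bar\lambda_0, \dots, \bar\lambda_{k-1})$ is a sequence of exactly $k$ real numbers, and that $sgnvar$ of a sequence of length $k$ is at most $k-1$, by the very definition of sign variation recalled after the statement of Descartes' rule in the Introduction: sign variations are counted between consecutive nonzero entries after deleting zeros, so with $k$ entries there are at most $k-1$ places where a sign change can occur. Combining this with the inequality $n_A(C) \le sgnvar(s_\alpha)$ from \eqref{Eq:mainDescartes} in Theorem~\ref{thm:main} yields $n_A(C) \le k-1$.

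Concretely, I would argue as follows. First I would invoke Theorem~\ref{thm:main}, which applies since by hypothesis $A$ and $C$ satisfy \eqref{eq:rank} and \eqref{eq:nonempty}, and fix an ordering $\alpha$ for $C$ (which exists by Proposition~\ref{prop:order}) together with a maximal subset $K$ of cardinality $k$ as in the statement; this is exactly the data feeding into $s_\alpha$. If $n_A(C)$ is infinite there is nothing to prove in the finite-bound sense, but in fact one should note that Corollary~\ref{cor:kminus1} as stated is about the finite case implicitly inherited from Theorem~\ref{thm:main} — or one observes separately that when $n_A(C)$ is not finite the inequality is vacuous or handled by the running hypotheses of the section. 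Then I would simply observe that $s_\alpha$ has $k$ coordinates, so $sgnvar(s_\alpha) \le k-1$, and chain this with \eqref{Eq:mainDescartes} to conclude.

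There is essentially no obstacle here: the only thing to be careful about is the bookkeeping on hypotheses, namely that Theorem~\ref{thm:main} requires $n_A(C)$ finite, so the cleanest phrasing is ``if $n_A(C)$ is finite, then $n_A(C) \le k-1$'' — matching the scope of Theorem~\ref{thm:main}. One might also remark, for context, that when $C$ is uniform we have $k = n+2$ and hence recover the bound $n_A(C) \le n+1$ mentioned in the Introduction as appearing in \cite{B13} and \cite{P-R}; but strictly for the proof this remark is not needed. So the proof is a two-line deduction:

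\begin{proof}
By construction, the sequence $s_\alpha = (\bar\lambda_0, \dots, \bar\lambda_{k-1})$ defined in~\eqref{eq:salpha} has exactly $k$ terms. Since the number of sign variations of a sequence of $k$ real numbers is at most $k-1$, we get $sgnvar(s_\alpha) \le k-1$. If $n_A(C)$ is finite, Theorem~\ref{thm:main} gives $n_A(C) \le sgnvar(s_\alpha) \le k-1$, which is~\eqref{Eq:mainDescartesCoro}.
\end{proof}
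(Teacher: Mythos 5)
Your proof is correct and is essentially identical to the paper's own argument: Corollary~\ref{cor:kminus1} follows immediately from~\eqref{Eq:mainDescartes} because $s_\alpha$ has length $k$, so $sgnvar(s_\alpha)\le k-1$. Your care about the finiteness hypothesis on $n_A(C)$ matches the implicit scope of Theorem~\ref{thm:main}, so nothing further is needed.
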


This follows immediately from~\eqref{Eq:mainDescartes}, since $s_\alpha$ is a sequence of lenght $k$ and so 
its sign variation can be at most equal to $k-1$.

\begin{remark}\label{rmk:simplex}
In case that one of the columns of $C$ is identically zero, we have in fact a system with support on
a simplex. If, for instance, $C_0=0$, then the vector $(1, 0,\dots,0)$ lies in ${\rm Ker}(C)$, and so
any Gale dual configuration satifies that $P_0$ is linearly independent of the other $P_i$, which
all have first coordinate equal to $0$ and are thus linearly dependent. It follows that $k=2$ and we deduce
from Corollary~\ref{cor:kminus1} that if $n_A(C)$ is finite, there is at most a single positive solution.
As we are assuming that $\rk (C)=n$ and it is not a pyramid, there cannot be two zero columns.
\end{remark}

We will now give upper bounds for $n_\A(C)$ for any $C$ for which it is finite, only in terms of the combinatorics
of the points in the configuration $\A$. Interestingly, we  can also deduce from 
Theorem~\ref{thm:main} necessary conditions to achieve the bound $n_A(C) = n+1$.

\begin{thm}\label{thm:mainbisbis}
Let $\A$ and $C$ as in the statement of Theorem~\ref{thm:main} with $n_A(C) < \infty$.
If $\A$ has signature $\{a_+,a_-\}$ ,
\begin{equation}\label{eq:ineq}
n_A(C) \leq
\left\{
\begin{array}{ll}
2 \sigma(\A) & \quad \mbox{if} \quad a_+ \neq a_- \\
2 \sigma(\A)-1 & \quad \mbox{if} \quad a_+=a_-.
\end{array}
\right.
\end{equation}
Moreover, if $n_A(C) =n+1$, then $C$ is uniform
and the support $\A$ is a circuit with maximal signature 
\begin{equation}\label{eq:signature}
\{\lfloor \frac{n+2}{2} \rfloor, n+2 -
 \lfloor \frac{n+2}{2} \rfloor  \},
 \end{equation}
 that is, there are essentially  the same number of
 positive and negative coefficients $\lambda_j$.
\end{thm}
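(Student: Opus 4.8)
The plan is to derive everything from the combinatorial description of $s_\alpha$ and the Descartes bound $n_A(C)\le sgnvar(s_\alpha)$ in Theorem~\ref{thm:main}, together with the volume bound $n_A(C)\le {\rm vol}_{\Z\A}(\A)$. First I would analyze the sign pattern of the sequence $s_\alpha=(\bar\lambda_0,\dots,\bar\lambda_{k-1})$. Recall that each $\bar\lambda_j=\sum_{\ell\in K_j}\lambda_\ell$, and the $K_j$ partition $[n+2]$ according to the rays spanned by the Gale dual vectors $P_\ell$, ordered by angle via $\bar\alpha$. The key observation is that $sgnvar$ of \emph{any} real sequence of length $k$ is at most $k-1$, hence at most $(\#\aleph_+)+(\#\aleph_-)-1 = (a_++a_-)-1$; but we want the sharper bound in terms of $\sigma(\A)=\min\{a_+,a_-\}$. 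To get it, I would argue that a maximal sign-alternating subsequence of $s_\alpha$ must, between any two consecutive same-sign terms of one kind, contain a term of the other kind; since the nonzero entries of the underlying sequence of $\lambda_\ell$'s has only $a_+$ positive and $a_-$ negative values, a run of alternating $+,-,+,\dots$ can have length at most $2\min\{a_+,a_-\}$ (with the two endpoints possibly the same sign), and exactly $2\sigma(\A)-1$ sign changes when $a_+=a_-$ forces the alternation to start and end with opposite-parity "ends." This is the combinatorial heart, and it is essentially the same counting that underlies the classical fact that a sequence with $p$ plus-signs and $m$ minus-signs has at most $2\min(p,m)$ (resp.\ $2\min(p,m)-1$ when $p=m$) sign changes, applied after noting $\sum_\ell\lambda_\ell=0$ so both $a_\pm\ge1$.

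Next I would handle the equality case $n_A(C)=n+1$. Since $n_A(C)\le k-1\le n+1$ always, equality forces $k=n+2$, i.e.\ all maximal minors of $C$ are nonzero, which is exactly the definition of $C$ being \emph{uniform}; I would cite Corollary~\ref{cor:kminus1} and the remark that $k=n+2$ iff $C$ is uniform. With $k=n+2$ we have $s_\alpha$ a permutation of the full sequence $(\lambda_0,\dots,\lambda_{n+1})$ (each $K_j$ a singleton), and $sgnvar(s_\alpha)=n+1$ means the reordered sequence is \emph{strictly} sign-alternating with no zero entries; in particular every $\lambda_j\neq0$, so $\A$ is a circuit. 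A length-$(n+2)$ strictly alternating $\pm$ sequence has $a_+=\lceil(n+2)/2\rceil$ and $a_-=\lfloor(n+2)/2\rfloor$ (or vice versa), which is precisely the signature~\eqref{eq:signature}. I would also check consistency with the first part of the theorem: when $n+2$ is even, $a_+=a_-$, and $2\sigma(\A)-1=(n+2)-1=n+1$, matching; when $n+2$ is odd, $a_+\neq a_-$, $\sigma(\A)=\lfloor(n+2)/2\rfloor$, and $2\sigma(\A)=n+1$, again matching — so no contradiction arises, and the bound is tight exactly at the stated signature.

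The main obstacle I anticipate is the purely combinatorial lemma bounding $sgnvar$ of $s_\alpha$ in terms of $\sigma(\A)$ rather than $a_++a_-$: one must be careful that the $\bar\lambda_j$ are \emph{sums} over the blocks $K_j$, not the individual $\lambda_\ell$, so a block could in principle mix positive and negative $\lambda_\ell$ and cancel. The point to make here is that this can only \emph{decrease} the count: if one replaces the true sequence $s_\alpha$ by the longer sequence obtained by listing, within each block $K_j$, all its $\lambda_\ell$ (in any order), the sign variation can only weakly increase, and that longer sequence is a reordering of $(\lambda_0,\dots,\lambda_{n+1})$ with $a_+$ positives and $a_-$ negatives, to which the classical counting bound applies. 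Hence $sgnvar(s_\alpha)\le 2\sigma(\A)$, and $\le 2\sigma(\A)-1$ when $a_+=a_-$ because then the extremal alternating arrangement is forced to have equal numbers of each sign and therefore an even number of terms but one fewer sign change than terms of the minority class would naively allow. Combining with Theorem~\ref{thm:main} finishes the inequality~\eqref{eq:ineq}, and the equality discussion above finishes the rest.
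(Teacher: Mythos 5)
Your proposal is correct and follows essentially the same route as the paper: deduce \eqref{eq:ineq} from Theorem~\ref{thm:main} by the classical count that a sequence with at most $a_+$ positive and $a_-$ negative entries has sign variation at most $2\min\{a_+,a_-\}$ (resp.\ $2\min\{a_+,a_-\}-1$ when $a_+=a_-$), after relating the signs of the block sums $\bar{\lambda}_j$ to the signs of the $\lambda_\ell$, and then in the equality case use $n_A(C)\le sgnvar(s_\alpha)\le k-1\le n+1$ to force $k=n+2$ (uniformity) and the balanced signature. The only cosmetic differences are that you refine $s_\alpha$ to the full list of $\lambda_\ell$'s and invoke monotonicity of $sgnvar$ under passing to subsequences, where the paper instead bounds the numbers $n_\pm$ of positive and negative entries of $s_\alpha$ directly by $a_\pm$, and that you read the signature~\eqref{eq:signature} off the strict alternation of $s_\alpha$ rather than from the inequality $n+1\le 2\sigma(\A)$ combined with $a_++a_-=n+2$; both variants are sound.
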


\begin{proof}
Let $\alpha$ be a bijection certifying the ordering property for $C$ and let $K$
be a maximal subset as before, with cardinal $k \le n+2$. 
By Theorem~\ref{thm:main}, we have that $n_\A(C)$ is bounded by 
$sgnvar(\bar{\lambda}_{ 0},\ldots,\bar{\lambda}_{ k-1})$.

For any $j \in [k]$, if
the value of $\bar{\lambda}_j$ is nonnegative (resp. nonpositive) then $K_{j} \cap \aleph_+ \neq
\emptyset$ (resp. $K_j \cap \aleph_-\neq
\emptyset$). Therefore, the number $n_+$ of positive (resp. $n_- $ of negative) terms in the sequence
$(\bar{\lambda}_{ 0},\ldots,\bar{\lambda}_{{{k}-1}})$ is at most $a_+$
(resp. $a_-$). So its sign variation is at most $2\min\{n_+, n_-\} \le 2 \sigma(\A)$. 
In case $a_+=a_-=\sigma(\A)$ and $\min\{n_+, n_-\} < \sigma(\A)$, clearly
$2 \min\{n_+, n_-\} < 2 \sigma(\A) -1$.  If $\min\{n_+, n_-\}= \sigma(\A)$, then $n_+=n_-=\sigma(\A)$ and then
$sgnvar(\bar{\lambda}_{ 0},\ldots,\bar{\lambda}_{ {k}-1}) \le 2 
\sigma(\A)-1$,
which concludes the proof of the inequalities. 

In case $n_\A(C) = n+1$, we deduce that $n+1 \le 2 \sigma(\A)$ or
moreover that $n+1 \le 2\sigma(\A) -1$ in case $a_+=a_-$. As $a_+ + \, a_- = n+2$, 
 it is easy to see that the signature of $\A$ is as in~\eqref{eq:signature}. In this case,
we get for $n$ even that $a_+ = a_-=\frac{n+2}2$ and so
$n+1 = 2 \frac{n+2}{2}  -1$, and for $n$ odd we get that the signature of
$\A$ equals $\{ \frac{n+1}2, \frac{n+3}2\}$  and so $n+1 = 2 \frac{n+1}2$. Thus,
we get equality in~\eqref{eq:ineq}, and so
$n_\A(C) = n+1 = sgnvar(\bar{\lambda}_{ 0},\ldots,\bar{\lambda}_{{k}-1})$, 
from which we deduce that $k=n+2$ and therefore $C$ is uniform.
\end{proof}

In the case of two variables, Theorem~\ref{thm:mainbisbis} asserts in particular that when $n_A(C)$ is finite, it is at most $3$ when the four points
in $\A$ are vertices of a quadrilateral and it is at most $2$ if one lies in the convex hull of the others. This result was independently proven by Forsg{\aa}rd in
\cite{F}, as a consequence of his study of coamoebas of fewnomials. As we stated in the Introduction, we have the following corollary for any
number of variables. 

\begin{corollary}\label{ex:simplex}.
Let $\A$ and $C$ as in the statement of Theorem~\ref{thm:main} with $n_A(C) < \infty$.
If  $\A$ consists of the vertices of a 
simplex plus one interior point  then
$n_\A(C) \le 2$.
\end{corollary}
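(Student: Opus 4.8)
The plan is to deduce this directly from Theorem~\ref{thm:mainbisbis} by computing the signature of the configuration $\A$. Suppose $\A$ consists of the $n+1$ vertices of an $n$-simplex together with one point lying in the interior of its convex hull. Since the point is interior, $\A$ is a circuit: every proper subset of $n+1$ points is affinely independent (the simplex is nondegenerate, and removing the interior point leaves the simplex vertices, while removing any vertex and keeping the interior point still yields an affinely independent set because the interior point is not on any facet). Hence all the $\lambda_j$ in the affine relation~\eqref{E:affinerelation} are nonzero, and $a_+ + a_- = n+2$.

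Next I would pin down the signature. Write the affine relation as $\lambda_v\, v + \sum_{i} \lambda_{i}\, p_i = 0$ with $\sum \lambda_j = 0$, where $v$ is the interior point and $p_0, \dots, p_n$ are the simplex vertices. Since $v$ lies in the \emph{interior} of $\conv\{p_0,\dots,p_n\}$, we may write $v = \sum_i t_i p_i$ with all $t_i > 0$ and $\sum_i t_i = 1$; this is the unique affine relation (up to scaling) expressing the dependency, so after rescaling we get $\lambda_v = -1$ (say) and $\lambda_i = t_i > 0$ for all $i$. Thus one of $a_+, a_-$ equals $1$ and the other equals $n+1$, so $\sigma(\A) = \min\{a_+, a_-\} = 1$, and $a_+ \neq a_-$ (as $n+1 \ge 2 > 1$, using $n \ge 1$). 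Applying the first case of~\eqref{eq:ineq} in Theorem~\ref{thm:mainbisbis} gives $n_\A(C) \le 2\sigma(\A) = 2$, as claimed.

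I do not expect a genuine obstacle here; the only point requiring care is the claim that the interior point forces a circuit and forces $\sigma(\A) = 1$. The circuit claim rests on the fact that an interior point of a full-dimensional simplex lies on no facet hyperplane, so no $n+1$-subset of $\A$ becomes affinely dependent; the signature claim rests on the uniqueness of the affine dependence of a circuit together with the fact that the barycentric coordinates of an interior point are strictly positive. Both are standard, and the rest is a one-line invocation of Theorem~\ref{thm:mainbisbis}. One could also remark that this bound $n_\A(C) \le 2$ is independent of the dimension $n$, which is the striking feature advertised in the Introduction.
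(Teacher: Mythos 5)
Your proposal is correct and follows essentially the same route as the paper: the interior point forces $\A$ to be a circuit with signature $\{1,n+1\}$ (barycentric coordinates of the interior point are strictly positive), hence $\sigma(\A)=1$, and Theorem~\ref{thm:mainbisbis} gives $n_\A(C)\le 2$. The extra care you take in checking $a_+\neq a_-$ and in justifying the circuit property is harmless and consistent with the paper's (more terse) argument.
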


Indeed, if a configuration $\A$ consists of the vertices of a 
simplex plus one interior point (in any dimension $n$), 
its signature equals $\{1, n+1\}$  and so
$\sigma(\A)=1$.  Theorem~\ref{thm:mainbisbis} ensures that
$n_\A(C) \le 2$..

\section{Proof of Theorem~\ref{thm:main}}\label{sec:proofs}

We first need to recall a generalization
of Descartes' rule of signs in the univariate case and apply it in our case via the notion of ordering in Section~\ref{S:Descartes}. 
Then, we complete the proof of our main Theorem~\ref{thm:main} in Section~\ref{S:proof}, which expands some basic facts
in~\cite{BBS,B07,B13}.

\subsection{A univariate generalization of Descartes' rule of signs and orderings}\label{S:Descartes}

Descartes' rule of signs concerns polynomials, that is, analytic functions of a single
variable that can be written as real linear combinations of monomials with integer
exponents. In fact, the same result holds for monomials with real exponents, which can
be evaluated on $\R_{>0}$. 
We recall the generalization of univariate Descartes' rule for
vector spaces generated by different choices of analytic functions in \cite{P-S} and 
 we then use it for our system in a Gale dual formulation.

\begin{definition} A sequence $(h_1,h_2,\ldots,h_s)$ of real valued analytic functions  defined 
on an open interval $\Delta \subset \R$ satisfies Descartes' rule of signs on $\Delta$ if for any
sequence $a=(a_1,a_2\ldots,a_s)$ of real numbers, the number of roots 
 of $a_1h_1+a_2h_2+\cdots+a_sh_s$ in $\Delta$ counted with multiplicity never exceeds $sgnvar(a)$.
\end{definition}

The classical Descartes' rule of signs asserts that monomial bases 
$(1,y,\ldots,y^{s-1})$ satisfy Descartes' rule of signs on the open interval $(0,+\infty)$.
We note that if $(h_1,h_2\ldots,h_s)$ satisfies Descartes' rule of signs 
on $\Delta$, then the same holds true for  the sequence $(h_s,h_{s-1},\ldots,h_1)$.

\smallskip

Recall that the Wronskian of $h_1,\ldots,h_s$ is the following determinant
$$
W(h_1,\ldots,h_s)= \mbox{det}
\left(
\begin{array}{cccc}
h_1 & h_2 & \ldots & h_s \\
h_1'& h_2' & \ldots & h_s'\\
\vdots & \vdots & \ldots & \vdots \\
h_1^{(s-1)} & h_2^{(s-1)} & \ldots & h_s^{(s-1)}
\end{array}
\right).$$
(The $(i,j)$ coefficient is the $(i-1)-th$ derivative of $h_j$.)
A well-known result asserts that $h_1,\ldots,h_s$ are linearly dependent if and only if their Wronskian
is identically zero.

The following result is proven in \cite{P-S}, part 5, items 87 and 90:

\begin{prop} [\cite{P-S}]\label{P:Descartes}
A sequence of functions $h_1,\ldots,h_s$ satisfies Descartes' rule of signs on $\Delta \subset \R$
if and only if for any collection of integers $1 \leq j_1 <j_2 \ldots < j_\ell \leq s$ we have
$$
W(h_{j_1},\ldots,h_{j_\ell}) \neq 0, \qquad (1)
$$
and for any collections of integers $1 \leq j_1 <j_2 \ldots < j_\ell \leq k$ 
and $1 \leq j_1' <j_2' \ldots < j_\ell' \leq k$ of the same size, we have
$$
W(h_{j_1},\ldots,h_{j_\ell}) \cdot W(h_{j_1'},\ldots,h_{j_\ell'}) >0. \qquad (2)
$$
\end{prop}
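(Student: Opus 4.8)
The statement to be proven is Proposition~\ref{P:Descartes}, which characterizes when a sequence $h_1,\ldots,h_s$ of real-valued analytic functions on $\Delta$ satisfies Descartes' rule of signs in terms of the nonvanishing and constant sign of all Wronskians of subtuples. Since this is attributed to P\'olya--Szeg\H{o}~\cite{P-S}, the natural approach is to reduce the two displayed conditions to the two classical criteria of P\'olya and Szeg\H{o} (items 87 and 90 in part 5 of their book) by a direct translation, and to supply the short interpolation-theoretic argument that links sign variation with sign data of Wronskians.

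First I would recall the key notion: a sequence $(h_1,\ldots,h_s)$ is an \emph{ECT-system} (extended complete Chebyshev system) on $\Delta$ if every Wronskian $W(h_1,\ldots,h_j)$ is nonvanishing on $\Delta$ for $j=1,\ldots,s$; more generally the relevant hypothesis here involves Wronskians of all increasing subtuples. The classical fact (P\'olya) is that if $(h_1,\ldots,h_s)$ is an ECT-system with all the $W(h_1,\ldots,h_j)>0$, then any nonzero linear combination $\sum a_i h_i$ has at most $s-1$ zeros, and more precisely the number of zeros (with multiplicity) is bounded by $sgnvar(a_1,\ldots,a_s)$. The forward implication (Descartes' rule $\Rightarrow$ Wronskian conditions) is obtained by testing the rule on cleverly chosen coefficient vectors: if some $W(h_{j_1},\ldots,h_{j_\ell})$ vanished at a point $t_0\in\Delta$, one could solve a linear system to produce a combination of $h_{j_1},\ldots,h_{j_\ell}$ with a zero of multiplicity $\ell$ at $t_0$, i.e. $\ell$ zeros counted with multiplicity, contradicting $sgnvar\le \ell-1$ for a length-$\ell$ nonzero vector; and if two such Wronskians (of subtuples of equal length, possibly with a common truncation) had opposite signs at two points, an intermediate-value plus interpolation argument again forces too many zeros. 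The reverse implication is the substantive analytic input and is exactly what P\'olya--Szeg\H{o} prove: under condition (1) the family is a Markov system on each subinterval, under the added sign coherence (2) one gets the global Descartes bound by the standard induction on $s$ (divide by $h_1$, differentiate, and apply Rolle together with the inductive hypothesis to the derived Wronskian system $W(h_i/h_1)'$).

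The key steps, in order, would be: (a) state precisely the two P\'olya--Szeg\H{o} criteria being cited and fix notation for Wronskians of subtuples; (b) prove the easy direction by the interpolation/contradiction argument sketched above, handling separately the vanishing of a single Wronskian (condition (1) fails) and the sign disagreement of two equal-length Wronskians (condition (2) fails); (c) prove the hard direction by induction on $s$, using the reduction $h_i\mapsto (h_i/h_1)'$ which sends the Wronskian conditions for $(h_1,\ldots,h_s)$ to those for the shorter system and which translates the zero count via Rolle's theorem and a careful bookkeeping of $sgnvar$ under the transformation $a\mapsto$ (differences/quotients of partial sums). Throughout I would only need that the $h_i$ are analytic so that zeros have finite multiplicity and Wronskians, being analytic, are either identically zero or have isolated zeros.

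\textbf{Main obstacle.} The delicate point is the bookkeeping in the hard direction: one must verify that passing from $(h_1,\ldots,h_s)$ with coefficients $(a_1,\ldots,a_s)$ to the derived system does not increase $sgnvar$, and that the loss of one zero guaranteed by Rolle is correctly compensated, so that the induction closes with the sharp bound $sgnvar(a)$ rather than the weaker $s-1$. Getting the sign hypotheses (2) to propagate correctly to the derived Wronskian system — i.e. checking that the ratios of Wronskians that appear have the signs needed to keep the derived system admissible — is the heart of the matter, and it is precisely here that I would lean on the precise formulation in \cite{P-S}, items 87 and 90, rather than reprove it from scratch.
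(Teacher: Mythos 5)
The paper offers no proof of this proposition at all: it is stated as a quotation of P\'olya--Szeg\H{o} \cite{P-S}, Part 5, items 87 and 90, which is precisely the reduction you ultimately lean on. Your outline of the two directions is consistent with the classical argument, so your approach coincides with the paper's (a citation to \cite{P-S}), and no further comparison is needed.
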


In particular, if a sequence of analytic functions $h_1,\ldots,h_s$ satisfies Descartes' rule of signs on $\Delta \subset \R$, 
then $h_1, \dots, h_s$ do not vanish, have the same sign on $\Gamma$ and no two of them are proportional.

We now apply Proposition~\ref{P:Descartes} to the Gale dual of a coefficient matrix $C \in \R^{n \times (n+2)}$ of rank $n$ 
satisfying the necessary condition~\eqref{eq:nonempty}. We keep the notations and definitions of Section~\ref{sec:1}. 
Let $C \in \R^{n \times (n+2)}$ be a full rank matrix satisfying~\eqref{eq:nonempty} and  $\alpha: [n+2] \to 
[n+2]$ an ordering of $C$; moreover, let $K$ be as in Section~\ref{ssec:main}
and let $\bar{\alpha}: [k] \rightarrow K$ be the restricted ordering.
Given a Gale dual configuration $P_0, \dots, P_{n+1}$ of $C$, let $\Delta_P $  the non-empty open set defined in Remark~\ref{rem:DeltaP}. 
Consider the associated linear functions 
\begin{equation}\label{eq:p}
p_j(y) = \langle P_j, (1,y) \rangle, 
\end{equation}
Then, $p_j$ are positive functions on $\Delta_P$ for any $j \in[n+2]$.

\begin{prop}\label{P:Key}
With the previous hypotheses and notations, the collection of nonzero rational functions
$(1/p_{ \bar{\alpha}_0},1/p_{\bar{\alpha}_1},\ldots,1/p_{\bar{\alpha}_{k-1}})$ satisfies Descartes' rule of signs on $\Delta_P$.
\end{prop}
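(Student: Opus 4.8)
The plan is to verify the two conditions of Proposition~\ref{P:Descartes} for the sequence $(1/p_{\bar{\alpha}_0}, \dots, 1/p_{\bar{\alpha}_{k-1}})$ on $\Delta_P$, namely non-vanishing of every Wronskian of a subcollection, and positivity of every product of two such Wronskians of equal size. First I would record that each $p_j$ is an affine function of $y$, say $p_j(y) = P_j^{(1)} + P_j^{(2)} y$, positive on $\Delta_P$ by the choice of $\Delta_P$ in Remark~\ref{rem:DeltaP}, so $1/p_j$ is a well-defined analytic function on $\Delta_P$. The key computational input is an explicit formula for the Wronskian of functions of the form $1/p_{j_1}, \dots, 1/p_{j_\ell}$: a standard computation (reciprocals of linear forms) gives
\[
W(1/p_{j_1}, \dots, 1/p_{j_\ell}) \; = \; \frac{\text{const}_\ell \cdot \prod_{r<s} \det(P_{j_r}, P_{j_s})}{\left(p_{j_1}(y)\cdots p_{j_\ell}(y)\right)^{\ell}},
\]
where $\text{const}_\ell$ is a nonzero constant depending only on $\ell$ (it is a product of factorials, with a sign), and $\det(P_{j_r}, P_{j_s}) = P_{j_r}^{(1)} P_{j_s}^{(2)} - P_{j_r}^{(2)} P_{j_s}^{(1)}$. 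I would prove this Vandermonde-type identity by pulling out the factors $1/p_{j_r}$ from the columns and recognizing the remaining determinant as a Vandermonde-like determinant in the ``slopes''; this is the one routine calculation the proof rests on, and I would cite or sketch it rather than grind through it.

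Granting this formula, condition (1) of Proposition~\ref{P:Descartes} is immediate: for indices drawn from $K$ (via $\bar{\alpha}$), every $\det(P_{\bar{\alpha}_r}, P_{\bar{\alpha}_s})$ with $r \neq s$ is nonzero, because by definition of $K$ one has $\det(C(i,j)) \neq 0$ for distinct $i,j \in K$, and by the Gale duality relation~\eqref{eq:detM3} this is equivalent to $\det(P_i, P_j) \neq 0$. Hence the numerator of the Wronskian is a nonzero constant times a product of nonzero terms, and the denominator is nowhere zero on $\Delta_P$, so the Wronskian is nowhere zero on $\Delta_P$. For condition (2), take two subcollections of the same size $\ell$, with index sets $\{\bar{\alpha}_{i_1}, \dots, \bar{\alpha}_{i_\ell}\}$ and $\{\bar{\alpha}_{i_1'}, \dots, \bar{\alpha}_{i_\ell'}\}$ (indices increasing in $[k]$). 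The denominators are squares (up to the even total power, and anyway positive since each $p_j > 0$ on $\Delta_P$), so the sign of the product of the two Wronskians equals $\sign\!\left(\prod_{r<s}\det(P_{\bar{\alpha}_{i_r}},P_{\bar{\alpha}_{i_s}}) \cdot \prod_{r<s}\det(P_{\bar{\alpha}_{i_r'}},P_{\bar{\alpha}_{i_s'}})\right)$. Here is where the ordering enters: by the definition of $\bar{\alpha}$ (Section~\ref{ssec:main}), there is a global sign $\varepsilon \in \{1,-1\}$ such that $\varepsilon\det(P_{\bar{\alpha}_i}, P_{\bar{\alpha}_j}) > 0$ for all $i < j$ in $[k]$. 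Therefore each factor $\det(P_{\bar{\alpha}_{i_r}},P_{\bar{\alpha}_{i_s}})$ with $i_r < i_s$ has sign $\varepsilon$, so $\prod_{r<s}\det(P_{\bar{\alpha}_{i_r}},P_{\bar{\alpha}_{i_s}})$ has sign $\varepsilon^{\binom{\ell}{2}}$, and likewise for the primed collection. The two signs are equal, so their product is $(\varepsilon^{\binom{\ell}{2}})^2 = 1 > 0$, giving condition (2).

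I would then conclude by invoking Proposition~\ref{P:Descartes}: both conditions hold, so $(1/p_{\bar{\alpha}_0}, \dots, 1/p_{\bar{\alpha}_{k-1}})$ satisfies Descartes' rule of signs on $\Delta_P$. I expect the main obstacle to be the bookkeeping in the Wronskian formula — getting the constant $\text{const}_\ell$, the exact power of the denominator, and the product of $2\times 2$ minors right — but nothing in it is conceptually hard; once it is in hand, conditions (1) and (2) follow cleanly from the definition of $K$ (for non-vanishing) and the defining property of the ordering $\alpha$ together with~\eqref{eq:detM3} (for the positivity of the sign products). A secondary point to be careful about is that the Wronskian identity is applied only to subcollections of the $p_{\bar{\alpha}_j}$ with $j$ ranging over $[k]$, so that all relevant $2\times 2$ minors are genuinely nonzero and the sign argument via $\varepsilon$ applies without degenerate cases.
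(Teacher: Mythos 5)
Your proposal is correct and follows essentially the same route as the paper: both compute the Wronskian of reciprocals of the linear forms via a Vandermonde-type identity, reduce each pairwise factor to a $2\times 2$ minor $\det(P_{\bar{\alpha}_r},P_{\bar{\alpha}_s})$ divided by a positive product of the $p_j$'s, and then use the nonvanishing of these minors for indices in $K$ together with the constant sign $\varepsilon$ coming from the ordering to verify conditions (1) and (2) of Proposition~\ref{P:Descartes}. The only cosmetic difference is that the paper observes directly that the Wronskian's sign depends only on the number $\ell$ of functions, whereas you make that sign explicit as $\varepsilon^{\binom{\ell}{2}}$ times the sign of the constant, which amounts to the same argument.
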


\begin{proof}
We need to check the hypotheses of Proposition~\ref{P:Descartes}.
For simplicity, we only compute the Wronskian of $1/p_1,1/p_2,\ldots,1/p_{\ell}$.
Recall that $p_j(y)=P_{j,1}+P_{j,2}y$ and denote by $\gamma_\ell$ the product
\[\gamma_\ell \, = \, \prod_{i=1}^{\ell}  (-1)^{i-1} (i-1)! .\]
We have
$$
(1/p_j)^{(i-1)} =(-1)^{i-1} (i-1)! \cdot\frac{P_{j,2}^{i-1}}{p_j^i},
$$
and thus by the computation of a Vandermonde determinant we get
$$
\begin{array}{lll}
W(\frac{1}{p_1},\frac{1}{p_2},\ldots,\frac{1}{p_{\ell}}) & = &
\gamma_\ell \cdot \frac{1}{p_1\cdots p_{\ell}} \cdot \mbox{det}
\left( (\frac{P_{j,2}}{p_j})^{i-1}
\right)_{1 \leq i,j \leq \ell} \\
& & \\
& = & \gamma_\ell\cdot \frac{1}{p_1\cdots p_\ell} \cdot \prod_{1 \leq r_1< r_2 \leq \ell} (\frac{P_{r_2,2}}{p_{r_2}}-\frac{P_{r_1,2}}{p_{r_1}}).
\end{array}
$$
Moreover: for any $i, j \in [k], j < i$
$$
\frac{P_{\bar{\alpha}_i,2}}{p_{\bar{\alpha}_i}}-\frac{P_{\bar{\alpha}_j,2}}{p_{\bar{\alpha}_j}} = \frac{1}{p_{\bar{\alpha}_i} p_{\bar{\alpha}_j}} \cdot \mbox{det}
(P_{\bar{\alpha}_j},P_{\bar{\alpha}_i})
 > 0  \text{\, on \, }  \Delta_P
$$
(or $\frac{P_{\bar{\alpha}_i,2}}{p_{\bar{\alpha}_i}}-\frac{P_{\bar{\alpha}_j,2}}{p_{\bar{\alpha}_j}} <0$ on $\Delta_P$ for any $i, j \in [k], j < i$).
Therefore, we verify condition $(1)$ in Proposition \ref{P:Descartes} that no Wronskian of distinct functions
taken among $1/p_{\bar{\alpha}_i}$ for $i \in [k]$ vanishes,  and moreover, the sign of the Wronskian only depends on the number $\ell$ of functions we are considering.
This implies condition $(2)$  in Proposition~\ref{P:Descartes} .
\end{proof}

\subsection{The proof of Theorem~\ref{thm:main}}\label{S:proof}

We start with a basic result we will need about sign variations.

\begin{lemma} \label{L:sgnvar}
Let $s=(c_0,c_1,\ldots,c_{k-1})$ be a sequence of real numbers such that the sign
variation $sgnvar(s)$ is nonzero.
Then $sgnvar(s)$ is the minimum of the quantities $s_q, q\in [k]$, defined by  $$s_0=1+sgnvar(c_{1},\ldots,c_{k-1}),  \quad s_{k-1}=1+sgnvar(-c_0,\ldots,-c_{k-2}),$$
$$\text{and } \, s_q=1+sgnvar(-c_0, \ldots, -c_{q-1},c_{q+1},\ldots,c_{k-1}) \; , \quad q=1,\ldots,k-2.$$
\end{lemma}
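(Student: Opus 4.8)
The plan is to prove Lemma~\ref{L:sgnvar} by establishing the two inequalities $sgnvar(s) \le s_q$ for every $q \in [k]$ and $sgnvar(s) \ge \min_q s_q$ separately. For the first inequality, fix $q$; I would compare a maximal alternating subsequence witnessing $sgnvar(s)$ with the sequence obtained by negating $c_0, \dots, c_{q-1}$ and deleting $c_q$. Negating an initial segment of a sequence changes its sign variation only near the junction point: if $c_{q-1}$ and $c_q$ (more precisely the last nonzero entry among $c_0,\dots,c_{q-1}$ and the first nonzero entry among $c_{q+1},\dots,c_{k-1}$) straddle a sign change in $s$, then after negation and deletion that change disappears but the count of changes strictly to the left is unaffected by a global sign flip, so one loses at most one sign variation; adding back the leading $1$ compensates. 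The bookkeeping is cleanest if I first reduce to the case where $s$ has no zero entries (zeros can be deleted without changing any of the quantities involved, since $sgnvar$ ignores them and deletion commutes with the operations defining $s_q$), and then argue by the location of $q$ relative to the positions where the sign of the nonzero entries actually changes.

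For the reverse inequality, the idea is to exhibit one particular $q$ for which $s_q \le sgnvar(s)$. Write $v = sgnvar(s) \ge 1$ and let $0 \le i_1 < i_2 < \cdots < i_v < k$ be indices such that $c_{i_t}$ and $c_{i_{t+1}}$ have opposite signs (a canonical choice: $i_t$ records the position just after the $t$-th sign change). The natural candidate is to take $q$ to be, say, $i_1$ — the location of the first sign change — or an index sitting just across the first sign change; removing the entry at such a $q$ and negating the prefix should merge the first ``block'' of constant sign with nothing to its left and leave exactly $v-1$ sign changes among the remaining entries, so that $s_q = 1 + (v-1) = v$. I would verify this by a direct inspection of how the alternation pattern of $s$ restricts to the index set $[k] \setminus \{q\}$ after the prefix negation, checking that no new sign change is created and exactly one is destroyed.

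The main obstacle I anticipate is the careful handling of the boundary cases and of zeros: when $q = 0$ there is no prefix to negate (the formula for $s_0$ has no minus signs), when $q = k-1$ there is no suffix, and when the entry $c_q$ being deleted is itself a ``pivot'' sitting between two sign changes, deleting it could a priori merge two changes into one or none. Getting a uniform statement that covers all these requires either a clean normal-form reduction (delete zeros, then describe $s$ by its sequence of maximal constant-sign runs) or a somewhat tedious case split. I would invest the effort in the normal-form reduction up front, because once $s$ is replaced by a $\pm1$ sequence with no two consecutive equal entries — i.e. a strictly alternating sequence of some length $v+1$ — all the quantities $s_q$ become transparent to compute and both inequalities follow by inspection. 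The only remaining subtlety is that the indices $q$ in the lemma range over the original $[k]$, so after the reduction I must translate back, noting that deleting a run's worth of equal entries only relabels indices and that the minimum over the reduced index set equals the minimum over the original one.
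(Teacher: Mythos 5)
Your overall strategy does prove the lemma, and it is a genuinely different route from the paper's for the nontrivial direction. The paper also treats $sgnvar(s)\le s_q$ as the easy half, but instead of exhibiting a minimizing index it argues by contradiction: if $s_q-sgnvar(s)>0$ for \emph{every} $q\in[k]$, then no sign change of $s$ can occur across any position $q$ (this is what the local quantities $sgnvar(c_0,c_1)$, $sgnvar(c_{q-1},c_q,c_{q+1})$, $sgnvar(c_{k-2},c_{k-1})$ are there to detect), which forces $sgnvar(s)=0$, contradicting the hypothesis. The paper's argument is shorter and requires no choice of witness; yours is constructive and additionally tells you where the minimum is attained (at an index adjacent to the first sign change), which is correct: taking $q$ to be the first nonzero entry whose sign differs from that of the first nonzero entry does give $s_q=sgnvar(s)$ in all cases, including $q=0$ and $q=k-1$.

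Two steps of your plan, as written, need repair. First, the delicate point in $sgnvar(s)\le s_q$ is exactly the pivot case you flag: if $c_q\neq 0$ has sign opposite to both the last nonzero entry of $c_0,\dots,c_{q-1}$ and the first nonzero entry of $c_{q+1},\dots,c_{k-1}$, then deleting $c_q$ destroys \emph{two} sign changes; the bound survives only because those two flanking entries then have equal signs, so negating the prefix creates a new change at the junction, for a net loss of one. Your justification (``the count of changes strictly to the left is unaffected by a global sign flip, so one loses at most one'') does not by itself exclude losing two; the same phenomenon hits your witness when the second sign block is a singleton (two changes destroyed, one created --- the net is still one, so $s_q=sgnvar(s)$ holds, but ``no new sign change is created and exactly one is destroyed'' is not literally true there). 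Second, the normal-form reduction is only half legitimate: deleting zeros indeed leaves every $s_q$ (for surviving $q$) unchanged, but collapsing a maximal constant-sign run does \emph{not} merely relabel indices --- for $s=(1,1,-1)$ one has $s_0=2$, whereas in the collapsed sequence $(1,-1)$ both quantities equal $1$. Only the minimum is preserved, and that is essentially the content of the lemma, so you cannot conclude ``both inequalities follow by inspection'' of the alternating $\pm1$ sequence and then transfer back; moreover the indices $q$ with $c_q=0$ disappear under the reduction, so the bound $s_q\ge sgnvar(s)$ at those indices needs its own (easy) check. The clean fix is to run the junction analysis directly on the original (or merely zero-free) sequence for the inequality, and to verify your witness there as well.
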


\begin{proof}
For any $q \in [k]$, we easily have $sgnvar(s) \leq s_q$.
Assume  that the sign variation $sgnvar(s)$ is not the minimum of these quantities $s_q$.
Then $s_q-sgnvar(s) >0$ for any $q \in [k]$.
But $s_q-sgnvar(s)$ is equal to $1-sgnvar(c_0,c_1)$ for $q=0$, to $1-sgnvar(c_{q-1},c_q,c_{q+1})$
for $1\leq q\leq k-2$ and to $1-sgnvar(c_{k-2},c_{k-1})$ for $q=k-1$.
It follows that if $s_q-sgnvar(s) >0$ for all $q \in [k]$, then $sgnvar(s) = 0$.
\end{proof}

Let $C$ be a coefficient matrix 
of maximal rank satisfying condition~\eqref{eq:nonempty}.
As we saw in item (ii) of Lemma~\ref{lem:first},  for any given $j_1 \in [n+2]$ there exists $j_2 \in [n+2] \setminus \{j_1\}$ such 
that $\mbox{det} (C(j_1,j_2)) \neq 0$. Indeed, for any pair of distinct indices $i, j \in [k]$, $\det(C({\bar{\alpha}_i, \bar{\alpha}_j}))\neq 0$. 
We will understand that $\det(C({\bar{\alpha}_i, \bar{\alpha}_i)})=0$.
We define for each $i \in [k]$ the column vector $B^{i}$ with $\ell$-th coordinate equal to
\begin{equation}\label{eq:Bi}
B^{i}_\ell \, = \,
\left\{ \begin{array}{ll}
(-1)^{\ell} \det(C({\bar{\alpha}_i}, \ell)), \quad & \ell \in [n+2], \, { \bar{\alpha}_i} \leq \ell\\
(-1)^{\ell+1} \det(C({ \bar{\alpha}_i}, \ell)), \quad &  \ell \in [n+2], \, { \bar{\alpha}_i} > \ell.
\end{array}
\right.
\end{equation} 
Then $B^{i} \in {\rm ker}(C)$
for any $i \in [k]$, and for any pair of different indices
$i,j \in [k]$, the vectors $B^i, B^j$ span ${\rm ker}(C)$. Moreover, assume that ${\bar{\alpha}_i} < { \bar{\alpha}_j}$ and consider the basis
of $\ker(C)$ given by the vectors:
$$P^{i} = (-1)^{{\bar{\alpha}_j}} \det(C({\bar{\alpha}_i, \bar{\alpha}_j}))^{-1} B^{i},
P^{j} = (-1)^{1+{\bar{\alpha}_i}} \det(C({\bar{\alpha}_i, \bar{\alpha}_j}))^{-1} B^{j}.$$
Then, $P^{i}_{{\bar{\alpha}_i}}=P^{j}_{ \bar{\alpha}_j}=0$,  and $P^{i}_{\bar{\alpha}_j} =
P^{j}_{ \bar{\alpha}_i}=1$. If ${\bar{\alpha}_i > \bar{\alpha}_j}$ we consider the basis given by
the opposite vectors so that the same conclusions hold.

We are now ready to present the proof of our main result.

\begin{proof}[Proof of Theorem~\ref{thm:main}]
Let $\A= \{w_0, \dots, w_{n+1}\}$, $A$, $C$, $K$ and an ordering $\alpha$ of $C$ as in the statement of Theorem~\ref{thm:main}. 
We can assume that $n_A(C) > 0$, since otherwise the bound is obvious.

Fix any $i \in [k]$ and pick any other index $j \in [k]$.  Let $B^{ji} \in \R^{(n+2) \times 2}$ be the rank $2$ matrix with columns $P^{j}, P^{i}$ in this order.
The Gale dual vectors $P_0, \dots, P_n$ read in the rows of $B^{ji}$ satisfy that
$P_{{\bar{\alpha}_i}} = (1,0), \,  P_{{\bar{\alpha}_j}} = (0,1)$.  
Thus, $x \in \R^n_{>0}$ is a solution of~\eqref{E:system} if and only if there exists $\mu \in {\mathcal C}_P^\nu$ (defined in Remark~\ref{rem:DeltaP}) such that
$(x^{w_0}, \dots, x^{w_{n+1}})^t = B^{ji} \mu^t$, or equivalently: $\mu_1=x^{w_{{\bar{\alpha}_i}}}$,
$\mu_2=x^{w_{{\bar{\alpha}_j}}}$, and 
\begin{equation}\label{eq:reduced}
x^{w_\ell} =  P_{\ell,1} x^{w_{{\bar{\alpha}_i}}} +  P_{\ell ,2} x^{w_{{\bar{\alpha}_j}}}, 
\quad \text{ for all }  \ell \in [n+2].
\end{equation}
In particular, both coordinates $\mu_1, \mu_2 \neq 0$ and the open set $\Delta_P \subset \R$ (defined in~eq\ref{eq:DeltaP}) is a nonempty interval. 
Up to a translation of the configuration $\A$, we can assume without loss of generality that $w_{{\bar{\alpha}_i}}=0$. 
Then,  $\mu_1=1$. Consider the function $g : \Delta_P \to \R$ defined by
\begin{equation}\label{eq:g}
g(y) = \prod_{\ell \in [n+2]}  p_\ell(y)^{\tilde{\lambda}_\ell},
\end{equation}
where $\tilde{\lambda}_\ell$ are the coefficients of the coprime affine relation
among the $w_\ell$ defined in~\eqref{eq:lambda} and the linear functions $p_j$ are as in~\eqref{eq:p}.
Then, $x$ is a positive solution of~\eqref{E:system} if and only if $y{=x^{w_{\bar{\alpha}_j}}}\in \Delta_P$ satisfies $g(y)=1$. 
Moreover, this bijection $x \mapsto y=x^{w_{{\bar{\alpha}_j}}}$ between positive solutions
of ~\eqref{E:system} and solutions of $g(y)=1$ in $\Delta_P$ preserves the multiplicities~\cite{BS08}.
Thus, to bound $n_A(C)$ we then have to bound the number of solutions of $g=1$ over the interval $\Delta_P$ counted with multiplicities. 
As we assume that $n_A(C)$ is finite, we get that $g\not \equiv 1$.

 The logarithm $G = log(g)$ is well defined over $\Delta_P$ and for any $y \in \Delta_P$,
and $g(y) =1 $ if and only if $G(y) =\sum_{\ell \in [n+2]} {\tilde{\lambda}_\ell} \, log(p_\ell(y))=0$.
Remark that $g(y)=1$ if and only if $y$ is a root of the polynomial
\begin{equation*}
\prod_{\tilde{\lambda_\ell}  >0}  p_\ell(y)^{\tilde{\lambda}_\ell} - \prod_{\tilde{\lambda_\ell} <0}  p_\ell(y)^{-\tilde{\lambda}_\ell},
\end{equation*}
which is nonzero because $g\not \equiv 1$ and has degree bounded by  $\vol_{\Z\A}(\A)$. 
Recall the partition of $[n+2]$ defined in~\eqref{eq:Kj}. For any $r \in [k]$, an index $\ell \in K_r$ if and only 
if there exists a positive constant $\gamma_{\ell r}$ such that $P_\ell = \gamma_{\ell r} P_{{\bar{\alpha}_r}}$. 
The derivative of $G$ over $\Delta_P$ equals
\begin{equation}\label{eq:lambdabar}
G'(y) = \sum_{\ell \in [n+2]} \tilde{\lambda}_\ell \frac{P_{\ell ,2}}{p_\ell(y)} =
 \frac{1}{I} \sum_{r \in [k]} \bar{\lambda}_r \frac{P_{{\bar{\alpha}_r},2}}{p_{{\bar{\alpha}_r}}(y)}.
\end{equation}
Thus,
\[ G'(y) =0 \text{ if and only if } \sum_{r \in [k]} \frac{\bar{\lambda}_r P_{{\bar{\alpha}_r},2}} {p_{{\bar{\alpha}_r}}(y)} =0. \]
Now, we can use Proposition~\ref{P:Key} to deduce that the number of roots of
$G'$ on $\Delta_P$ counted with multiplicities is at most 
$$s_i=sgnvar(P_{{\bar{\alpha}_0}, 2}\, \bar{\lambda}_0, \dots, P_{{\bar{\alpha}_{k-1}}, 2} \, \bar{\lambda}_{k-1}).$$
Rolle's theorem leads to the bound $s_i+1$ for the number of roots of $g$ contained in $\Delta_P$ counted with multiplicities, and thus to this bound for the number $n_A(C)$.
Note that  for any $r \in [k]$,
$$P_{{\bar{\alpha}_r},2} =(-1)^{{\bar{\alpha}_r+\bar{\alpha}_j}} \delta_{i,j,r}  \, \det(C({\bar{\alpha}_r, \bar{\alpha}_i}))\det(C({\bar{\alpha}_j, \bar{\alpha}_i}))^{-1},$$
where $\delta_{i,j,r}=1$ if $({\bar{\alpha}_j-\bar{\alpha}_i)(\bar{\alpha}_r-\bar{\alpha}_i})>0$ and $\delta_{i,j,r}=-1$
otherwise.
Thus, as suggested by the notation, the number $s_i$ does not depend on $j$. So we get in fact $k$ Descartes' rule 
of signs given by the choice of $i$ in $[k]$.
Recall that $P_{{\bar{\alpha}_i}}=(1,0)$. Thus $P_{{\bar{\alpha}_r,2}}=\det(P_{{\bar{\alpha}_i}}, P_{{\bar{\alpha}_r}})$ for any $r \in [k]$. Since $\alpha$ is an ordering, we get
either $P_{{\bar{\alpha}_r,2}}<0$ for any $r < {\bar{\alpha}_i}$ and $P_{{\bar{\alpha}_r,2}}>0$ for any $r > {\bar{\alpha}_i}$, or
$P_{{\bar{\alpha}_r,2}}>0$ for any $r < {\bar{\alpha}_i}$ and $P_{{\bar{\alpha}_r,2}}<0$ for any $r > {\bar{\alpha}_i}$. Therefore, 
$$s_i=sgnvar(-\bar{\lambda}_0, \dots, -\bar{\lambda}_{i-1}, \bar{\lambda}_{i+1}, \ldots, \bar{\lambda}_{k-1}) \quad i=1,\ldots,k-2,$$
$s_0=sgnvar(\bar{\lambda}_1, \dots, \bar{\lambda}_{k-1})$ and
$s_{k-1}=sgnvar(-\bar{\lambda}_0, \dots, -\bar{\lambda}_{k-2})$.
So we are in the hypotheses of Lemma~\ref{L:sgnvar}. Then, we can combine
the $k$ obtained bounds to get our bound~\eqref{Eq:mainDescartesvol}.
\end{proof}

\begin{proof}[Proof of Proposition~\ref{congruence}]
We use the notations in the proof of Theorem~\ref{thm:main}.
We may rewrite the function $g : \Delta_P \to \R$ considered in~\eqref{eq:g} 
as 
\[g(y) = c \cdot \prod_{r \in [k]}  p_{\bar{\alpha}_r}(y)^{\bar{\lambda}_r},\]
where $c$ is some positive constant. We already saw that $n_A(C)$ is the number of solutions of $g=1$ over the interval $\Delta_P$ counted with multiplicities.

Given the ordering $\alpha$, the cone ${\mathcal C}_P$ in Remark~\ref{rem:DeltaP} equals
\begin{equation}\label{eq:cone}
{\mathcal C}_P \, = \, \R_{>0} P_{\bar{\alpha}_0} + \R_{>0} P_{\bar{\alpha}_{k-1}}.
\end{equation}
As $(1,0), (0,1)$ belong to the chosen Gale dual of $C$, the dual cone ${\mathcal C}_P^\nu$  is contained in the first quadrant.  Then,
the open interval $\Delta_P$ is bounded unless $P_{\bar{\alpha}_0} = (1,0)$, in which case $p_{\bar{\alpha}_0}$ equals the constant function $1$.
Note that the endpoints $a$ and $b$ of $\Delta_P$ are the roots of $p_{\bar{\alpha}_0}$ and $p_{\bar{\alpha}_{k-1}}$  (where $\infty$ is considered as the root of the constant $1$).
We get that $n_A(C)$ is even or odd according to whether the signs of $g-1$ for $y \in \Delta_P$ close to $a$ and $b$ respectively are the same or are different. As we are assuming
that both ${\bar{\lambda}_0}, {\bar{\lambda}_{k-1}} \neq 0$,
it follows from~\eqref{eq:cone} that the
signs of $g-1$ near $a,b$ are minus those of ${\bar{\lambda}_{k-1}}$ and ${\bar{\lambda}_0}$, so that $n_A(C)$ and  
the sign variation of the pair $sgnvar(\bar{\lambda}_0, \bar{\lambda}_{k-1})$ are congruent modulo $2$.
It remains to note that $sgnvar(\bar{\lambda}_0, \bar{\lambda}_{k-1})$ and the sign variation of the whole sequence $signvar(s_{\alpha})$ are congruent modulo $2$.

In case $\A$ does not have a Cayley structure, no proper subsum of the entries $\lambda_i$ can be zero, 
in particular we have that ${\bar{\lambda}_0}, {\bar{\lambda}_{k-1}} \neq 0$. On the other side, if $C$ is uniform,
then $k=n+2$ and each coefficient $\bar{\lambda}_j$ is equal to one of the coefficients $\lambda_i$, which we are assuming that are all non-vanishing.
\end{proof}

\begin{rem}\label{rem:noparity}
 The proof of Proposition~\ref{congruence} makes clear why the hyphoteses we made are needed. In fact, the result is not true in general if one of
 $\bar{\lambda}_0$ or $\bar{\lambda}_{k-1}$ equals $0$. Consider for example any $n \ge 3$ and $\lambda = (\lambda_0, \dots, \lambda_{n-2}, 1, -2,1)$ . Then $\lambda_0+ \cdots+ \lambda_{n-2}=0$ (since the sum of the coordinates of $\lambda$ is equal to zero), so that
 $\A$ has a Cayley structure. Let $C$ have the following Gale dual: $P_0= P_1= \dots = P_{n-2}=(1,0)$, $P_{n-1}=(3,21/8)$,
$P_n=(1,1)$ and $P_{n+1}=(0,1)$. Then, we get that $k=4$, $\bar{\alpha}(0)=0, \bar{\alpha}(1)=n-1, \bar{\alpha}(2)=n,
\bar{\alpha}(3)=n+1$, and
$\bar{\lambda}_0=0, \bar{\lambda}_1= 1, \bar{\lambda}_2=-2, \bar{\lambda}_3=1$.
Therefore the function $g$ in~\eqref{eq:g} equals
$$g = \frac{ 3 y (1+7/8 y)}{(1+y)^2}.$$
In our case, the cone ${\mathcal C}_P$ in~\eqref{eq:cone} and its dual cone equal the first quadrant and so
$\Delta_P= (0, \infty).$
For any $y>0$, $g(y)=1$ precisely when  $y (3+21/8 y)-(1+y)^2= 13/8 y^2 + y -1=0$, which  has only one positive
root, while on the other side, $signvar(\bar{\lambda}_0, \dots,\bar{\lambda}_{3})= 2$ is even.
\end{rem}

We end this section with the proof of Proposition~\ref{P:noloss}.

\begin{proof}[Proof of Proposition~\ref{P:noloss}]
Since $n_A(C) >0$, condition~\eqref{eq:nonempty} is satisfied.
We use the notations of the proof of Theorem~\ref{thm:main} above.

Assume that $d_{\bar{\alpha}_r} \cdot \bar{\lambda}_{r} 
\neq 0$ for some $r \in [k]$. Then, since $P^i$ and $P^j$ give a basis of ${\rm ker}(C)$,
either $P_{{\bar{\alpha}_r} ,1} \cdot \bar{\lambda}_{r} 
\neq 0$ or $P_{{\bar{\alpha}_r},2} \cdot \bar{\lambda}_{r} 
\neq 0$. We may assume $P_{{\bar{\alpha}_r,2}} \cdot \bar{\lambda}_{r} 
\neq 0$. Then the derivative of $G$ over $\Delta_P$ is not identically zero, thus $g$ is not identically equal to $1$ on $\Delta_P$ and $n_A(C)$ is finite.
To show the other implication, assume on the contrary that $d_{{\bar{\alpha}_r}} \cdot \bar{\lambda}_{r} 
=0$ for all $r \in [k]$. Then, the derivative of $G$ over $\Delta_P$ is identically zero, and thus $g$ is constant on $\Delta_P$.
But since $n_A(C) > 0$, there exists $y \in \Delta_P$ such that $g(y)=1$. Thus $g$ is identically equal to $1$ on $\Delta_P$, and $n_A(C)$ is infinite.
\end{proof}

\section{Optimality of the bounds}\label{sec:opt}

We prove the optimality of our bound~\eqref{Eq:mainDescartesvol} in Theorem~\ref{thm:main} in general, 
by exhibiting particular configurations $A$ and associated coefficient matrices $C$ for which the bound is attained. We keep the notations in the previous sections.

In \cite{B13}, polynomial systems supported on a circuit in $\R^n$ and having
$n+1$ positive solutions have been obtained with the help of real dessins d'enfants. In \cite{P-R},  
the main tool for constructing such systems is the generalization of the
Viro's patchworking theorem obtained in \cite{Stu}. We will recall  the construction of \cite{P-R} 
since we will only have to slightly modify it in order to prove Theorem \ref{T:optimal} for any value of the sign variation.

\begin{thm}\label{T:optimal}
Let $r$ and $n$ be any integer numbers such that $0 \leq r \leq n$ and $n>0$.
\begin{enumerate}
\item 
There exist
matrices $A,C \in \R^{n \times (n+2)}$ satisfying~\eqref{eq:rank} and~\eqref{eq:nonempty}, and an ordering $\bar{\alpha}:[k] \rightarrow K$ of $C$ such that
$$
{ sgnvar(\bar{\lambda}_{_0},\ldots,\bar{\lambda}_{{k-1}})}=1+r \quad \mbox{and} \quad n_A(C)=1+r.
$$
\item For any positive integers $a_+,a_-$  there exist matrices $A,C \in \R^{n \times (n+2)}$ satisfying~\eqref{eq:rank} and~\eqref{eq:nonempty},
such that the set $\calA \subset \R^n$ consisting of the column vectors
of $A$ except the first row of $1$'s,  has signature $\{a_+, a_-\}$ and
$$n_A(C)=
\left\{
\begin{array}{ll}
2 \sigma(\A) \quad & \mbox{if} \quad a_+ \neq a_- \\
2 \sigma(\A)-1 \quad & \mbox{if} \quad a_+=a_-.
\end{array}
\right.
$$
\end{enumerate}
\end{thm}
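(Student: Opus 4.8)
The plan is to prove Theorem~\ref{T:optimal} by an explicit construction, treating the two parts separately but with a common combinatorial backbone: in both cases I will prescribe the sign pattern of the Gale dual vectors $P_0, \dots, P_{n+1} \in \R^2$ (equivalently, the signs of the maximal minors of $C$) and the signed affine relation $\lambda$, and then appeal to a patchworking/dessin d'enfants construction to realize a system with the required number of positive roots. Concretely, I would first recall the construction from \cite{P-R} (and \cite{Stu}): one builds a one-parameter family $f_i(x,t)$ whose solutions, as $t \to 0$, are governed by the combinatorics of a regular triangulation of $\conv(\A)$ together with sign data, and one reads off a lower bound for the number of positive solutions from the number of sign changes along a ``ladder'' of cells. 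The key point is that \cite{P-R} already produces the extreme case $n_A(C) = n+1$; for part~(1) with a prescribed $r < n$, I only need to \emph{degenerate} that construction by merging some of the $P_j$ into a common ray so that $k = r+2$ rather than $k = n+2$, while keeping the $\bar{\lambda}_j$ alternating in sign. Then Theorem~\ref{thm:main} gives $n_A(C) \le sgnvar(s_\alpha) = 1+r$, and the patchworking construction gives the matching lower bound $n_A(C) \ge 1+r$.

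For part~(1), the concrete recipe I would use: take $\lambda = (\lambda_0, \dots, \lambda_{n+1})$ with the first $n-r$ coordinates being a block that sums to zero (or is simply absorbed by collapsing the corresponding $P_j$ onto the ray of $P_{\bar\alpha_0}$), and the remaining $r+2$ coordinates alternating strictly in sign, say $\lambda_{\bar\alpha_j}$ of sign $(-1)^j$ for $j = 0, \dots, r+1$, with $|\lambda|$ chosen so that $\vol_{\Z\A}(\A)$ is large enough not to interfere with the bound. I choose the Gale dual so that $P_{\bar\alpha_0}, \dots, P_{\bar\alpha_{r+1}}$ are in strictly convex position in an open halfplane (hence $\alpha$ is a genuine ordering by Proposition~\ref{prop:order}), and the collapsed vectors sit on the ray of $P_{\bar\alpha_0}$, so that $K_0$ contains those indices and $K_j = \{\bar\alpha_j\}$ for $j \ge 1$. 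Then $\bar\lambda_j$ has sign $(-1)^j$ for all $j \in [k]$, so $sgnvar(s_\alpha) = k-1 = r+1$. The univariate function $g$ from~\eqref{eq:g} is then, up to a positive constant, $g(y) = \prod_{j} p_{\bar\alpha_j}(y)^{\bar\lambda_j}$ with alternating exponents; I would choose the $P_{\bar\alpha_j}$ with rational or even integer coordinates so that $g(y) = 1$ is an explicit polynomial equation of degree $r+1$ on $\Delta_P$, and verify directly (by an intermediate value argument on the $r+1$ intervals between consecutive roots of the $p_{\bar\alpha_j}$, using the alternating behavior of $g$) that it has exactly $r+1$ solutions in $\Delta_P$; by the multiplicity-preserving bijection from the proof of Theorem~\ref{thm:main}, this yields $n_A(C) = r+1$. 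This is really the univariate Descartes sharpness example transported through Gale duality, so it should be clean.

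For part~(2), given $a_+, a_-$ with $a_+ + a_- = n+2$ and $\sigma = \min\{a_+, a_-\}$, I would arrange $\lambda$ so that, after ordering the columns by the argument of $P_j$, the signs of $\lambda_{\bar\alpha_0}, \dots, \lambda_{\bar\alpha_{n+1}}$ form a maximally alternating pattern compatible with having $a_+$ positives and $a_-$ negatives: that is, interleave the signs as $+,-,+,-,\dots$ for as long as both are available, producing $2\sigma$ sign changes if $a_+ \ne a_-$ (the minority sign runs out in the middle, the rest of the sequence is constant-sign, contributing nothing more), and $2\sigma - 1$ sign changes if $a_+ = a_-$ (the sequence is fully alternating of even length $n+2$, hence has $n+1 = 2\sigma - 1$ sign changes). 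With $C$ uniform ($k = n+2$), we have $\bar\lambda_j = \lambda_{\bar\alpha_j}$, so $sgnvar(s_\alpha)$ equals exactly the target value; Theorem~\ref{thm:main} gives the upper bound, and again the patchworking construction of \cite{P-R}, slightly modified so that the sign data along the ladder realizes exactly this many sign changes, gives the matching lower bound. I would spell out that the existence of integer $w_j$ realizing the prescribed $\lambda$ as their affine relation is automatic (choose $w_j$ so that $A$ has the prescribed maximal minors, e.g. via a suitable Cayley-type or moment-curve configuration), and that one can simultaneously choose $C$ uniform with the prescribed minor signs — this is where one invokes the oriented-matroid realizability of rank-$2$ Gale dual configurations, which is trivial since any sign vector of a point configuration on a line / in the plane is realizable.

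The main obstacle I anticipate is not the upper bounds (those are immediate from Theorem~\ref{thm:main} once the combinatorics is set up) but \emph{certifying the lower bound} $n_A(C) \ge$ target, i.e.\ checking that the patchworking/Viro deformation actually produces that many positive solutions rather than fewer. Two routes are available: (a) cite \cite{P-R, Stu, B13} and argue that the needed modification is a routine change of the sign vector in their construction, carefully checking that their hypotheses (regularity of the subdivision, transversality) still hold; or (b) bypass patchworking entirely and, via the Gale dual reduction already proved in Theorem~\ref{thm:main}, reduce everything to constructing a single explicit univariate equation $g(y) = 1$ of the stated form with the stated number of roots in an interval $\Delta_P$ — which is elementary calculus. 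I would favor route (b) for part~(1) and a hybrid for part~(2), since the univariate reduction makes the lower bound a one-variable exercise and avoids re-verifying the subtleties of higher-dimensional patchworking; the only care needed is to ensure the chosen $\Delta_P$ is genuinely the interval cut out by $\{(1,y) \in \mathcal C_P^\nu\}$, which one checks from the explicit $P_j$.
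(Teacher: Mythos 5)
Your outline gets the easy half right (the upper bounds via Theorem~\ref{thm:main}, and the realizability of a prescribed affine relation $\lambda$ and a prescribed rank-$2$ Gale configuration), but the actual content of this theorem is the \emph{lower} bound --- exhibiting a concrete pair $(A,C)$ with the stated number of positive solutions --- and that is precisely what your proposal does not carry out. Your preferred route (b) is flawed as sketched: on the interval $\Delta_P$ of \eqref{eq:DeltaP} \emph{all} the linear forms $p_j$ are strictly positive, so the roots of the $p_{\bar{\alpha}_j}$ lie outside $\Delta_P$ (only the two extreme ones are its endpoints); hence you cannot manufacture $r+1$ sign changes of $g-1$ by an intermediate value argument ``on the $r+1$ intervals between consecutive roots of the $p_{\bar{\alpha}_j}$''. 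Whether a function of the constrained form \eqref{eq:g}, with exponents of the prescribed alternating sign pattern, really has $r+1$ solutions of $g=1$ \emph{inside} $\Delta_P$ depends delicately on the magnitudes of the $\bar{\lambda}_j$ and the positions of the $P_j$; by Gale duality this is the original realization problem restated, so it cannot be waved through --- an explicit choice and an honest root count are required. The gap is even more serious for part (2): prescribing only the signs of $\lambda$ in the $\alpha$-order is provably insufficient, since the circuit with relation $(1,-1,3,-3,1,-1)$ (Example~\ref{ex:nonsharp}, via Theorem~0.2 of \cite{B13}) carries the maximally alternating balanced pattern and yet admits \emph{no} coefficient matrix with $2\sigma(\A)-1=n+1$ positive solutions; so the magnitudes of the $\lambda_j$ must be controlled and a matching system exhibited, which your ``slightly modified'' patchworking leaves unspecified.

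For comparison, the paper's proof works with the explicit system \eqref{E:example} of \cite{P-R}, whose Gale dual vectors and affine relation $((-1)^n,2,-2,\ldots,(-1)^{n-1}2,-1)$ are written down, and for $r<n$ multiplies the term $\epsilon^{2i-3}x_1^2$ of the $i$-th equation by $-1$ for every $i\geq r+1$ \emph{of the same parity as} $r+1$. That parity condition is the whole trick: it ensures simultaneously that (i) in the Viro--Sturmfels patchworking count \cite{Stu} the binomial systems attached to the zonotopes $Z_j$ with $j\geq r+1$ lose their positive solution while $Z_0,\ldots,Z_r$ keep theirs, giving exactly $r+1$ positive solutions for small $\epsilon$, and (ii) the sign-flipped columns rotate the Gale vectors so that the induced ordering yields $sgnvar(s_\alpha)=r+1$ (the naive flip of a single sign gives $r+1$ solutions but a larger sign variation, as the paper notes). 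Note also that $C$ remains uniform there; your alternative of collapsing several $P_j$ onto one ray to force $k=r+2$ is legitimate in principle, but it makes the explicit univariate verification of route (b) unavoidable, and that verification is exactly what is missing.
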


\begin{proof}
We first note that (2) follows easily from  (1). 
In order to prove (1),  let $n \geq 2$ be any integer. The following system is considered in \cite{P-R} :
\begin{equation}\label{E:example}
\begin{array}{ccl}
x_1x_2 & = & \epsilon+ x_1^2, \\
x_{i}x_{i+1}  & = & 1+\epsilon^{2i-3} x_1^2 \, , \quad  i=2,\ldots,n-1, \\
x_n & =  & 1+ \epsilon^{2n-3}x_1^2.
\end{array}
\end{equation}
where $\epsilon$ is a positive real parameter that will be taken small enough.
The support of this system is the circuit $\calA=\{w_0,w_1,\ldots,w_n,w_{n+1}\}$ 
with $w_0=(0,\ldots,0)$, $w_i=e_i+e_{i+1}$ for $i=1,\ldots,n-1$, $ w_n=e_n$ and $w_{n+1}=2e_1$,
where $(e_1,\ldots,e_n)$ stands for the canonical basis of $\R^n$. 
Let $C$ be the associated matrix. A choice of Gale dual of $C$ is given by
the vectors $P_0=(1,0)$, $P_1=(\epsilon,1)=\epsilon(1,\epsilon^{-1})$, 
$P_i=(1,\epsilon^{2i-3})$ for $i=2,\ldots,n$ and $P_{n+1}=(0,1)$.
These points are depicted in Figure \ref{F:Pi}.
\begin{figure}[htbp]
\begin{center}
 \resizebox{.4\textwidth}{!}{
\input{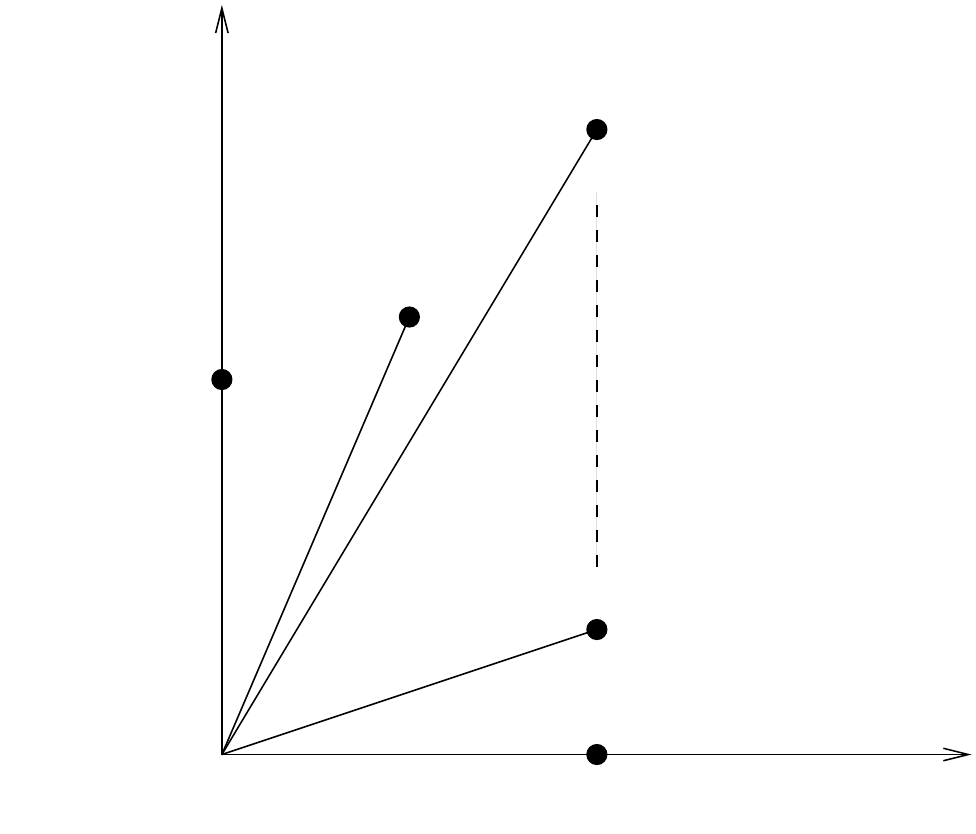_t}
}
\caption{}
\label{F:Pi}
\end{center}
\end{figure}
It follows that  the bijection
$\alpha: [n+2] \rightarrow [n+2]$ defined by $\alpha_0=0$, 
$\alpha_1=n,\alpha_2=n-1,\ldots,\alpha_{n}=1$ and $\alpha_{n+1}=n+1$ defines
an ordering of $C$.
We compute the affine relation
\begin{equation}\label{E:relationexample} 
(-1)^n w_0+2w_1-2w_2+2w_3+\cdots+(-1)^{n-1} \cdot 2 \cdot w_n-w_{n+1}=0,
\end{equation}
which shows that $sgnvar(s_\alpha)=\vol(\A)=n+1$.
Therefore, the bound provided by Theorem \ref{thm:main} for the system~\eqref{E:example} is the maximal one $n+1$.
It is proved in \cite{P-R} that this system has precisely $n+1$ positive solutions for $\epsilon$ small enough 
(moreover, they prove that $\epsilon=1/4$ is sufficiently small).
Let us recall how the proof goes on (see \cite{P-R} for more details). Denote by $T_i$ the Newton polytope 
(a triangle) of the $i$-th equation of~\eqref{E:example} : $T_i=[0,e_i+e_{i+1}, 2e_1]$ for $i=1,\ldots,n-1$ and $T_n=[0,e_n, 
2e_1]$ (see Figure \ref{F:Ti}).
The exponents of $\epsilon$ in the system~\eqref{E:example} determine a convex mixed subdivision of the Minkowski sum
$T_1+\cdots+T_n$. The mixed cells in this mixed subdivision are $n$-dimensional zonotopes $Z_0,Z_1,\ldots,Z_{n}$. 
Each zonotope is a Minkowski sum of edges of $T_1,\ldots, T_n$.
For $i=1,\ldots,n-1$, consider the edges of $T_i$ defined by $E_{i,0}=[0,e_i+e_{i+1}]$ and $E_{i,1}=[2e_1,e_i+e_{i+1}]$. 
Consider also the edges of $T_n$ defined by $E_{n,0}=[0,e_n]$ and $E_{n,1}=[2e_1,e_{n}]$ (see Figure \ref{F:Ti}).

\begin{figure}[htbp]
\begin{center}
 \resizebox{1\textwidth}{!}{
\input{DescarTicorrected.pdf_t}
}
\caption{}
\label{F:Ti}
\end{center}
\end{figure}

For $j=1,\ldots,n-1$ the zonotope $Z_j$ is the Minkowski sum
$$Z_j=E_{1,1}+\cdots+ E_{j,1}+E_{j+1,0}+\cdots+ E_{n,0}.$$
Moreover, $Z_0=E_{1,0}+\cdots+ E_{n,0} $ and $Z_n  =  E_{1,1}+\cdots+E_{n,1}$.
To each zonotope $Z_j$ corresponds a system of binomial equations obtained from~\eqref{E:example} 
by keeping for the $i$-th equation its truncation to the $i$-th summand of $Z_j$, which is
the edge $E_{i,0}$ or $E_{i,1}$ according as $j<i$ or not. 
For instance, for $j=1,\ldots,n-1$, the binomial system corresponding to $Z_j$ is
\begin{equation}\label{E:exampleZj}
x_1x_2  =  x_1^2 \, ,   \ldots  , \, x_{j}x_{j+1}  =  \epsilon^{2j-3} x_1^2 \, , \, x_{j+1}x_{j+2}  =  1 \, ,  \ldots, \, x_n  =   1.
\end{equation}
The generalization of Viro's patchworking theorem obtained in \cite{Stu} gives that the number of positive solutions of the 
system~\eqref{E:example} is the total number of positive solutions of the binomial systems corresponding to $Z_0,\ldots,Z_n$. 
Moreover, the number of positive solutions of such a binomial system is at most one, and it is equal to one when
in each binomial equation $c_ax^a=c_bx^b$ we have $c_ac_b >0$.
Therefore, the total number of positive solutions of~\eqref{E:example} for $\epsilon >0$ small enough
can be read off the signs of the coefficients of the binomial systems corresponding to $Z_0,\ldots,Z_n$.
It follows then that~\eqref{E:example} has exactly $n+1$ positive solutions for $\epsilon$ small enough, 
which proves the optimality of item $(1)$ of Theorem \ref{T:optimal} when $r=n$.

\begin{figure}[htbp]
\begin{center}
\resizebox{.5\textwidth}{!}{
\input{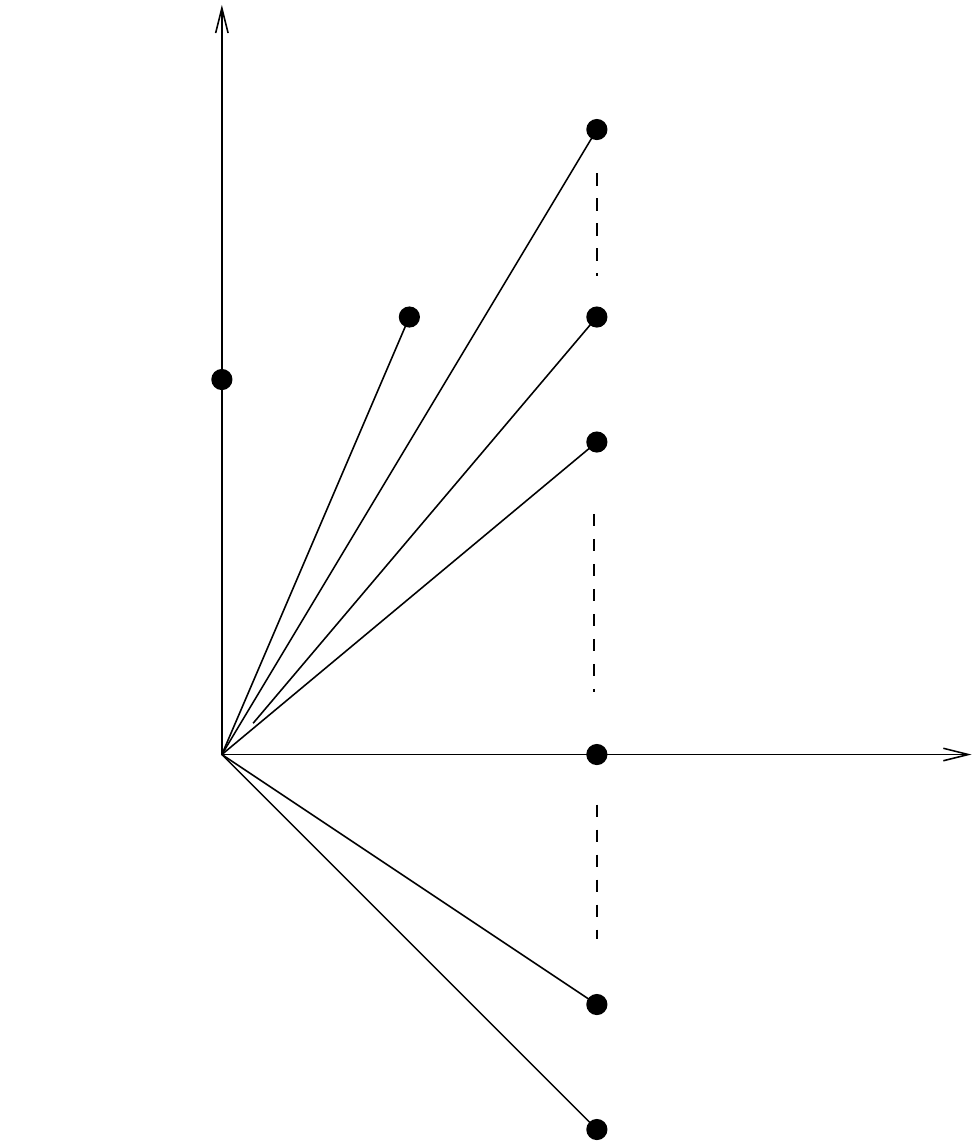_t}
}
\caption{}
\label{F:Piallr}
\end{center}
\end{figure}

Assume now that $0 \leq r \leq n-1$.  To get a system with $1+r$ positive solutions from~\eqref{E:example}, 
we may multiply the term  $\epsilon^{2r-1} x_1^2$ of the $(r+1)$-th equation by $-1$ and keep unchanged the other equations.
So, for $r \leq n-2$ this gives the system
\begin{equation}\label{E:examplerlessn-2}
\begin{array}{ccl}
x_1x_2 & = & \epsilon+ x_1^2, \\
x_{i}x_{i+1}  & = & 1+\epsilon^{2i-3} x_1^2 \, , \quad i=2,\ldots,n-1, \, i \neq r+1, \\
x_{r+1}x_{r+2}  & = & 1\mathbf{-} \epsilon^{2r-1} x_1^2, \\
x_n & =  & 1+ \epsilon^{2n-3}x_1^2.
\end{array}
\end{equation}
while for $r=n-1$ it gives
\begin{equation}\label{E:exampler=n-1}
\begin{array}{ccl}
x_1x_2 & = & \epsilon+ x_1^2, \\
x_{i}x_{i+1}  & = & 1+\epsilon^{2i-3} x_1^2 \, ,  i=2,\ldots,n-1\\
x_n & =  & 1  {\bf -} \epsilon^{2n-3}x_1^2.
\end{array}
\end{equation}

Then, for each zonotope $Z_j$ having  the edge $E_{r+1,1}$ as a summand, the corresponding binomial
system has no positive solution. Indeed, the $(r+1)$-th equation of this binomial system is $x_{r+1}x_{r+2}=-\epsilon^{2r-1} x_1^2$ if $r \leq n-2$,
or $x_n=-\epsilon^{2n-3} x_1^2$ if $r=n-1$. On the other hand, binomial systems corresponding to zonotopes
without the edge $E_{r+1,1}$ as a summand are unchanged, and thus have still one positive solution.
The zonotopes having  $E_{r+1,1}$ as a summand are all $Z_j$  with $j \geq r+1$. Therefore, by
Viro patchworking Theorem, for $\epsilon >0$ small enough the system~\eqref{E:examplerlessn-2} has $r+1$ positive solutions 
(where $r<n-1$) while the system \eqref{E:exampler=n-1}  has $n$ positive solutions.
However, the bound $sgnvar(s_\alpha)$ given by Theorem \ref{thm:main} for these systems will be bigger than $1+r$ in general. To get a system
with $1+r$ positive solutions and for which $sgnvar(s_\alpha)=1+r$, we proceed as follows. 
First, we note that if we multiply in~\eqref{E:example}
the term $\epsilon^{2i-3} x_1^2$ of the $i$-th equation by $-1$ for $i=r+1$ and {\em any} other arbitrary values of $i \geq r+1$, 
then we still get a system with $r+1$ positive solutions for $\epsilon >0$ small enough (by the Viro patchworking Theorem). 
Indeed, for such a system, we again find that binomial systems corresponding to zonotopes $Z_j$ with $j < r+1$ have 
each a positive solution while the other binomial systems have no positive solutions.

In order to get a system with $r+1=sgnvar(s_\alpha)$ positive solutions,
we multiply in~\eqref{E:example} the term $\epsilon^{2i-3} x_1^2$ of the $i$-th equation (for $i=1$, this term is $x_1^2$) by $-1$ for any $i \geq r+1$ such that 
$i$ and $r+1$ have the same parity (and $i \leq n$). If $r \geq 1$, this gives the system

\begin{equation}\label{E:examplerfinal}
\begin{array}{ccl}
x_1x_2 & = & \epsilon+ x_1^2, \\
x_{i}x_{i+1}  & = & 1+\epsilon^{2i-3} x_1^2 \, , \quad  i=2,\ldots,r, \\
x_{i}x_{i+1}  & = & 1+{(-1)}^{i-r}\epsilon^{2i-3} x_1^2 \, , \quad  i=r+1,\ldots,n-1, \\
x_n & =  & 1+ {(-1)}^{n-r}\epsilon^{2n-3}x_1^2.
\end{array}
\end{equation}
while for $r=0$ this gives
\begin{equation}\label{E:example0final}
\begin{array}{ccl}
x_1x_2 & = & \epsilon-x_1^2, \\
x_{i}x_{i+1}  & = & 1+{(-1)}^{i}\epsilon^{2i-3} x_1^2 \, , \quad  i=2,\ldots,n-1,\\
x_n & =  & 1+ {(-1)}^{n}\epsilon^{2n-3}x_1^2.
\end{array}
\end{equation}

Figure \ref{F:Piallr} depicts a choice of Gale dual $P_0, \dots P_{n+1}$ for the
resulting matrix of coefficients $C$, from which we can read an
associated ordering  $\alpha:[n+2] \rightarrow [n+2]$, and it is straightforward to check that $sgnvar(s_\alpha)=r+1$.
\end{proof}

For a univariate real polynomial $f(x) = c_0 + c_1 x + \cdots + c_r x^r$ with any number of monomials,
the sign variation $sgnvar(c_0, \dots, c_r)$ is always bounded above by the degree of the polynomial divided by the index of the subroup of $\Z$ generated by the exponents.
This is no longer true in arbitrary dimensions, in particular in the case where the exponent set $\A$ is a circuit. 
We present two examples of particular configurations. Recall that the normalized volume $\vol_\Z(\A)$ is the multivariate  
generalization of the degree of a univariate polynomial, as it is a bound for the number of isolated complex solutions.

\begin{example} \label{ex:square}
Assume that $\vol_{\Z\A}(\A)=\vol_\Z(\A) \ge n+1$. The bound we give in~\ref{Eq:mainDescartes} reduces to
the sign variation of the ordered sequence of $\lambda_j$'s. But, for instance in
the case in which $\A$ is the configuration $\{ (0,0), (1,0), (0,1), (1,1)\} \subset \Z^2$, the normalized
volume equals $2$ (and $I=1$), while we can get matrices $C$ with $k=4$ and  with an ordering such  that the sign variation equals $3$. 
The sign variation of the corresponding
ordered sequence of $\lambda_j$'s is an upper bound, which is not sharp in this case, but
the minimum in~\eqref{Eq:mainDescartesvol} is indeed a sharp upper bound, since the two complex roots can be real positive.
\end{example}

\begin{example} \label{ex:nonsharp}
Let $n=4$ and let $\A$ be a circuit with affine relation $(1,-1,3,-3,1,-1)$. Then, 
 the maximum possible bound~\eqref{Eq:mainDescartesvol} in Theorem~\ref{thm:main} for a coefficient matrix $C$ and support $\A$ 
 is equal to $5 = \vol_\Z(\A)=\vol_{\Z\A}(\A) = n+1$. However, Theorem~0.2 in~\cite{B13} shows that there cannot be $\vol_\Z(\A)$ 
 positive real solutions to this system, so the bound in~\eqref{Eq:mainDescartesvol} is not sharp in this case.
\end{example}

We end by mentioning further work which was inspired  by the present paper. 
Theorem \ref{thm:mainbisbis} gives a necessary condition on a circuit $\A \subset \Z^n$ for the existence  of a coefficient matrix $C$ 
such that $n_A(C)$ reaches the maximal possible value $n+1$. Recently, Boulos El Hilany~\cite{H} has obtained a necessary and sufficient condition on $\A$ for this to hold.
The first author of the present paper has obtained a partial generalization of Descartes' rule of signs for any polynomial system. 
 A new bound is obtained as a sum of two terms: a sign variation of a sequence whose terms are products of maximal minors of the coefficient and 
 the exponent matrices of the system, and a quantity depending on the number of variables and the total number of monomials of the system. 
 This new bound,  which is a refinement of a result in ~\cite{BS07}, is not sharp in general.

\providecommand{\bysame}{\leavevmode\hbox to3em{\hrulefill}\thinspace}
\providecommand{\MR}{\relax\ifhmode\unskip\space\fi MR }
\providecommand{\MRhref}[2]{%
  \href{http://www.ams.org/mathscinet-getitem?mr=#1}{#2}
}
\providecommand{\href}[2]{#2}

\end{document}